%------------------------------------------------------------------------------
% Here please write the date of submission of paper or its revisions:
%------------------------------------------------------------------------------
%
\documentclass[12pt, reqno]{amsart}
\usepackage{amsmath, amsthm, amscd, amsfonts, amssymb, graphicx, color}
\usepackage[bookmarksnumbered, colorlinks, plainpages]{hyperref}
\hypersetup{colorlinks=true,linkcolor=red, anchorcolor=green, citecolor=cyan, urlcolor=red, filecolor=magenta, pdftoolbar=true}

\textheight 22.5truecm \textwidth 14.5truecm
\setlength{\oddsidemargin}{0.35in}\setlength{\evensidemargin}{0.35in}

\setlength{\topmargin}{-.5cm}

\newtheorem{theorem}{Theorem}[section]
\newtheorem{lemma}[theorem]{Lemma}
\newtheorem{proposition}[theorem]{Proposition}
\newtheorem{corollary}[theorem]{Corollary}
\theoremstyle{definition}

\theoremstyle{remark}

\numberwithin{equation}{section}

\begin{document}
\setcounter{page}{1}

\title[$(m,n)$-Jordan derivations]{Characterizations of $(m,n)$-Jordan derivations on some algebras }

\author[G. An, J. He]{Guangyu An, Jun He}

\address{Department of Mathematics, Shaanxi University of Science and Technology,
Xi'an 710021, China.}
\email{\textcolor[rgb]{0.00,0.00,0.84}{anguangyu310@163.com}}

\address{Department of Mathematics and Physics, Anhui Polytechnic University,
Wuhu 241000, China.}
\email{\textcolor[rgb]{0.00,0.00,0.84}{15121034934@163.com}}

\subjclass[2010]{46L57, 46L51, 46L52.}

\keywords{$(m,n)$-Jordan derivation, $(m,n)$-Jordan derivable mapping, $C^*$-algebra, generalized matrix algebra}

\date{Received: xxxxxx; Revised: yyyyyy; Accepted: zzzzzz.
\newline \indent $^{*}$ Corresponding author}

\begin{abstract}
Let $\mathcal R$ be a ring, $\mathcal{M}$ be a $\mathcal R$-bimodule
and $m,n$ be two fixed nonnegative integers with $m+n\neq0$.
An additive mapping $\delta$ from $\mathcal R$ into $\mathcal{M}$
is called an \emph{$(m,n)$-Jordan derivation} if $(m+n)\delta(A^{2})=2mA\delta(A)+2n\delta(A)A$
for every $A$ in $\mathcal R$.
In this paper, we prove that every $(m,n)$-Jordan
derivation from a $C^{*}$-algebra into its Banach bimodule is zero.
An additive mapping $\delta$ from $\mathcal R$ into $\mathcal{M}$
is called a \emph{$(m,n)$-Jordan derivable mapping} at $W$ in $\mathcal R$ if
$(m+n)\delta(AB+BA)=2m\delta(A)B+2m\delta(B)A+2nA\delta(B)+2nB\delta(A)$
for each $A$ and $B$ in $\mathcal R$ with $AB=BA=W$.
We prove that if $\mathcal{M}$ is a unital $\mathcal A$-bimodule
with a left (right) separating set generated algebraically by all
idempotents in $\mathcal A$, then every $(m,n)$-Jordan derivable mapping at zero from $\mathcal A$ into $\mathcal{M}$
is identical with zero.
We also show that if $\mathcal{A}$ and $\mathcal{B}$ are two unital algebras,
$\mathcal{M}$ is a faithful unital $(\mathcal{A},\mathcal{B})$-bimodule
and $\mathcal{U}={\left[\begin{array}{cc}\mathcal{A} &\mathcal{M} \\\mathcal{N} & \mathcal{B} \\\end{array}\right]}$
is a generalized matrix algebra,
then every $(m,n)$-Jordan derivable mapping at zero from $\mathcal{U}$ into itself is equal to zero.
\end{abstract}\maketitle

\section{Introduction}

Let $\mathcal{R}$ be an associative ring.
For an integer $n\geq2$, $\mathcal{R}$ is said to be \emph{$n$-torsion-free}
if $nA=0$ implies that $A=0$ for every $A$ in $\mathcal{R}$. Recall that a ring $\mathcal{R}$
is \emph{prime} if $A\mathcal{R}B=(0)$ implies that either $A=0$ or $B=0$
for each $A,B$ in $\mathcal{R}$;
and is \emph{semiprime} if $A\mathcal{R}A=(0)$ implies that $A=0$ for every $A$ in $\mathcal{R}$.

Suppose that $\mathcal{M}$ is a $\mathcal{R}$-bimodule.
An additive mapping $\delta$ from $\mathcal{R}$ into $\mathcal{M}$ is
called a \emph{derivation} if
$\delta(AB)=\delta(A)B+A\delta(B)$
for each $A,B$ in $\mathcal{R}$;
and $\delta$ is called a \emph{Jordan derivation} if
$\delta(A^{2})=\delta(A)A+A\delta(A)$
for every $A$ in $\mathcal{R}$.
Obviously, every derivation is a Jordan derivation,
the converse is, in general, not true.
A classical result of Herstein \cite{I. Herstein} proves that every Jordan derivation
on a 2-torsion-free prime ring is a derivation;
Cusack \cite{Cusack} generalizes \cite[Theorem 3.1]{I. Herstein} to
2-torsion-free semiprime rings.

In \cite{M. Bresar J. Vukman},
Bre\v{s}ar and Vukman introduce the concepts of left derivations and Jordan left derivations.
Suppose that $\mathcal{M}$
is a left $\mathcal{R}$-module. An additive mapping $\delta$ from $\mathcal{R}$ into $\mathcal{M}$
is called a \emph{left derivation} if $\delta(AB)=A\delta(B)+B\delta(A)$ for each $A,B$ in $\mathcal{R}$;
and $\delta$ is called a \emph{Jordan left derivation} if
$\delta(A^{2})=2A\delta(A)$
for every $A$ in $\mathcal{R}$. Bre\v{s}ar and Vukman \cite{M. Bresar J. Vukman}
prove that if $\mathcal R$ is a prime ring and $\mathcal{M}$
is a 6-torsion free left $\mathcal{R}$-module,
then the existence of
nonzero Jordan left derivations from $\mathcal R$ into $\mathcal M$
implies that $\mathcal R$ is a commutative ring. Deng \cite{Q. Deng}
shows that \cite[Theorem 2.1]{M. Bresar J. Vukman}
is still true when $\mathcal{M}$ is only 2-torsion free.

In \cite{J. Vukman2}, Vukman introduces the concept of $(m,n)$-Jordan derivations.
Suppose that $\mathcal{M}$ is a $\mathcal{R}$-bimodule and $m,n$ are two fixed nonnegative
integers with $m+n\neq0$.
An additive mapping $\delta$ from $\mathcal{R}$ into $\mathcal{M}$ is
called an \emph{$(m,n)$-Jordan derivation} if
$$(m+n)\delta(A^{2})=2mA\delta(A)+2n\delta(A)A$$
for every $A$ in $\mathcal{R}$.
It is easy to show that the concept of $(m,n)$-Jordan derivations
covers the concept of Jordan derivations as well as the concept
of Jordan left derivations. By Vukman \cite[Theorem 4]{J. Vukman1} and Kosi-Ulbl \cite[Theorem 8]{I. Kosi-Ulbl}, we know that
if $m,n$ are two nonnegative integers with $m\neq n$,
then every $(m,n)$-Jordan derivation from a
complex semisimple Banach algebra into itself is
identically equal to zero.

In Section 2, we prove that
if $m,n$ are two positive integers with $m\neq n$, then every $(m,n)$-Jordan derivation
from a $C^{*}$-algebra into its Banach bimodule is identically equal to zero.

Suppose that $\mathcal M$ is a
$\mathcal R$-bimodule and $m,n$ are two fixed nonnegative
integers with $m+n\neq0$. An additive mapping $\delta$ from $\mathcal{R}$ into $\mathcal{M}$
is called an \emph{$(m,n)$-Jordan derivable mapping} at an element $W$ in $\mathcal{R}$ if
$$(m+n)\delta(AB+BA)=2m\delta(A)B+2m\delta(B)A+2nA\delta(B)+2nB\delta(A)$$
for each $A,B$ in $\mathcal{A}$ with $AB=BA=W$.

Let $\mathcal{J}$ be an ideal in $\mathcal{R}$.
$\mathcal{J}$ is said to be
a \emph{left separating set} of $\mathcal{M}$
if for every $N$ in $\mathcal{M}$,
$N\mathcal{J}=\{0\}$ implies $N=0$;
and $\mathcal{J}$ is said to be
a \emph{right separating set} of $\mathcal{M}$
if for every $M$ in $\mathcal{M}$,
$\mathcal{J}M=\{0\}$ implies $M=0$.
When $\mathcal{J}$ is a left separating set and a right separating set of $\mathcal{M}$,
we call $\mathcal{J}$ a \emph{separating set} of $\mathcal{M}$.
Denote by $\mathfrak{J}(\mathcal{R})$ the subring of $\mathcal R$ generated algebraically by all
idempotents in $\mathcal{R}$.

In Section 3, we assume that $\mathcal A$ is a unital algebra, $\mathcal M$ is a unital $\mathcal A$-bimodule
with a left (right) separating $\mathcal{J}\subseteq\mathfrak{J}(\mathcal{A})$, and $\delta$
is an $(m,n)$-Jordan derivable mapping at zero from $\mathcal A$ into $\mathcal M$ such that $\delta(I)=0$,
we show that if $m,n$ are two positive integers with $m\neq n$, then $\delta$ is identically equal to zero.
As applications, we study the $(m,n)$-Jordan derivable mappings at zero
on some non self-adjoint operator algebras.

A \emph{Morita context} is a set $(\mathcal{A}, \mathcal{B}, \mathcal{M}, \mathcal{N})$
and two mappings $\phi$ and $\varphi$, where $\mathcal{A}$ and $\mathcal{B}$ are two algebras,
$\mathcal{M}$ is an $(\mathcal{A},\mathcal{B})$-bimodule and $\mathcal{N}$ is a $(\mathcal{B},\mathcal{A})$-bimodule,
$\phi:~\mathcal{M}\otimes_{\mathcal{B}}\mathcal{N}\rightarrow\mathcal{A}$
and~$\varphi:~\mathcal{N}\otimes_{\mathcal{A}}\mathcal{M}\rightarrow\mathcal{B}$
are two homomorphisms satisfying the following commutative diagrams:
$$
\CD
   \mathcal{M}\otimes_{\mathcal{B}}\mathcal{N}\otimes_{\mathcal{A}}\mathcal{M} @>\phi\otimes I_{\mathcal{M}}>> \mathcal{A}\otimes_{\mathcal{A}}\mathcal{M} \\
   @V I_{\mathcal{M}}\otimes\varphi VV @V \cong VV  \\
    \mathcal{M}\otimes_{\mathcal{B}}\mathcal{B}@>\cong>> \mathcal{M}
\endCD\\
$$
and
$$
\CD
   \mathcal{N}\otimes_{\mathcal{A}}\mathcal{M}\otimes_{\mathcal{B}}\mathcal{N} @>\varphi\otimes I_{\mathcal{N}}>> \mathcal{B}\otimes_{\mathcal{B}}\mathcal{N} \\
   @V I_{\mathcal{N}}\otimes\phi VV @V \cong VV  \\
    \mathcal{N}\otimes_{\mathcal{A}}\mathcal{A}@>\cong>> \mathcal{N}.
\endCD
$$
These conditions insure that the set
$${\left[\begin{array}{cc}\mathcal{A} &\mathcal{M} \\\mathcal{N} & \mathcal{B} \\\end{array}\right]}
=\left\{{\left[\begin{array}{cc}A & M \\N & B \\\end{array}\right]}
:A\in \mathcal{A}, B\in \mathcal{B}, M\in \mathcal{M}, N\in \mathcal{N}\right\}$$
forms an algebra under the usual matrix addition and the matrix multiplication.
We call it a \emph{generalized matrix algebra}.

Let $\mathcal A, \mathcal B$ be two algebras and $\mathcal M$
be an $(\mathcal A, \mathcal B)$-bimodule, the set
$${\left[\begin{array}{cc}\mathcal{A} &\mathcal{M} \\0 & \mathcal{B} \\\end{array}\right]}
=\left\{{\left[\begin{array}{cc}A & M \\0 & B \\\end{array}\right]}
:A\in \mathcal{A}, B\in \mathcal{B}, M\in \mathcal{M}\right\}$$
under the usual matrix addition and matrix multiplication is called a \emph{triangular algebra}.

$\mathcal{M}$ is called a \emph{left faithful} unital $\mathcal{A}$-module
if for every $A$ in $\mathcal{A}$,~$A\mathcal{M}=\{0\}$ implies $A=0$;
$\mathcal{M}$ is called a \emph{right faithful} unital $\mathcal{B}$-module
if for every $B$ in $\mathcal{B}$, $\mathcal{M}B=\{0\}$ implies $B=0$.
When $\mathcal M$ is a left faithful unital $\mathcal{A}$-module and
a right faithful unital $\mathcal{B}$-module, we call $\mathcal M$ a faithful
unital $(\mathcal{A},\mathcal{B})$-bimodule.

In Section 4, we suppose that $\mathcal{A},\mathcal{B}$ are two unital algebras,
$\mathcal{M}$ is a faithful unital $(\mathcal{A},\mathcal{B})$-bimodule,
and we prove that if $m,n$ are two positive integers with $m\neq n$,
then every $(m,n)$-Jordan derivable mapping at zero from
a generalized matrix algebra $\mathcal{U}={\left[\begin{array}{cc}\mathcal{A} &\mathcal{M} \\\mathcal{N} & \mathcal{B} \\\end{array}\right]}$
into itself is identically equal to zero.

Throughout this paper, $\mathcal{A}$ denotes an algebra over the complex field $\mathbb{C}$, and $\mathcal{M}$
denotes an $\mathcal{A}$-bimodule.

\section{$(m,n)$-Jordan derivations on $C^{*}$-algebras}

In \cite{anguangyu}, we prove that if $n=0$ and $m\neq n$, then every
$(m,n)$-Jordan derivation
from a $C^{*}$-algebra into its Banach bimodule is zero.
In this section, we assume that $m,n$ are two
positive integers with $m\neq n$.
and study the $(m,n)$-Jordan derivations on $C^{*}$-algebras.

The following lemma will be used repeatedly in this section.

\begin{lemma}\cite[Proposition 1]{J. Vukman2}
Let $\mathcal{A}$ be an algebra, $\mathcal{M}$ be an $\mathcal{A}$-bimodule
and $m,n$ be two nonnegative integers with $m\neq n$. If
$\delta$ is an $(m,n)$-Jordan derivation from $\mathcal A$ into $\mathcal M$,
then for each $A,B$ in $\mathcal{A}$, we have that
$$(m+n)\delta(AB+BA)=2m\delta(A)B+2m\delta(B)A+2nA\delta(B)+2nB\delta(A).$$
\end{lemma}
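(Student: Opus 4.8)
The plan is to derive the bilinear relation by \emph{linearizing} (polarizing) the defining quadratic identity of an $(m,n)$-Jordan derivation. This is the classical device for converting a Jordan-type identity, quadratic in a single variable, into the associated symmetric bilinear identity, and the whole argument is purely formal: it uses only the additivity of $\delta$ together with the distributive laws for the left and right actions of $\mathcal{A}$ on $\mathcal{M}$.

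First I would fix $A,B$ in $\mathcal{A}$ and apply the defining identity to the element $A+B$. Since $(A+B)^2=A^2+AB+BA+B^2$ and $\delta$ is additive, the left-hand side expands as
\[
(m+n)\bigl[\delta(A^2)+\delta(AB+BA)+\delta(B^2)\bigr].
\]
On the right-hand side I would expand $2m(A+B)\delta(A+B)+2n\delta(A+B)(A+B)$ using the additivity of $\delta$ and the distributivity of the two module actions. This splits the right-hand side into ``diagonal'' contributions, which are exactly the right-hand sides of the defining identity evaluated at $A$ and at $B$ separately, together with the remaining ``mixed'' contributions involving one factor from $A$ and one factor from $B$.

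Next I would subtract from this equation the two instances of the defining identity taken at $A$ and at $B$ separately. On the left the $\delta(A^2)$ and $\delta(B^2)$ terms cancel, leaving $(m+n)\delta(AB+BA)$; on the right the diagonal contributions cancel, leaving precisely the mixed terms. Collecting these mixed terms gives exactly the right-hand side of the asserted identity.

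I do not expect a genuine obstacle here: no torsion-freeness, primeness, or even the hypothesis $m\neq n$ is invoked for this direction, so the identity is valid in full generality. The only point that requires attention is careful bookkeeping in the noncommutative bimodule $\mathcal{M}$, namely tracking, for each mixed term, whether the algebra element multiplies $\delta(\cdot)$ on the left or on the right, so that the coefficients $2m$ and $2n$ become attached to the correct sides in the final expression.
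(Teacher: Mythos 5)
Your method---polarizing the defining identity---is the right idea, and it is essentially how the cited result is obtained (the paper itself gives no proof of Lemma 2.1, only the citation to \cite[Proposition 1]{J. Vukman2}). But the bookkeeping point you flag at the end as ``the only point that requires attention'' is precisely where your argument fails. Starting from the definition as printed in this paper, $(m+n)\delta(A^{2})=2mA\delta(A)+2n\delta(A)A$, replacing $A$ by $A+B$ and subtracting the instances at $A$ and at $B$ yields
\[
(m+n)\delta(AB+BA)=2mA\delta(B)+2mB\delta(A)+2n\delta(A)B+2n\delta(B)A,
\]
that is, the coefficient $m$ stays attached to the \emph{left} action and $n$ to the \emph{right} action, exactly as in the definition (polarization never moves algebra elements across $\delta$). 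The identity asserted in Lemma 2.1 has the opposite attachment: $m$ goes with $\delta(A)B,\ \delta(B)A$ and $n$ with $A\delta(B),\ B\delta(A)$. The two identities differ by the term $2(m-n)\bigl[\delta(A)B+\delta(B)A-A\delta(B)-B\delta(A)\bigr]$, which there is no reason to expect to vanish for a general bimodule when $m\neq n$. So your concluding claim that the mixed terms ``give exactly the right-hand side of the asserted identity'' is false as a derivation from the paper's stated definition; it is not a harmless bookkeeping detail but the entire content of the discrepancy.

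What is actually going on is an internal inconsistency in the paper that a blind proof needed to detect: the identity in Lemma 2.1 is the polarization of the \emph{opposite} convention $(m+n)\delta(A^{2})=2m\delta(A)A+2nA\delta(A)$ (Vukman's original definition), and it is this convention that the rest of the paper uses consistently---see the displayed identity in the proof of Proposition 2.2, the definition of $(m,n)$-Jordan derivable mappings, and the computation opening the proof of Proposition 3.2, all of which put $m$ with the right action. Under that convention your polarization argument is verbatim correct, needs no hypothesis $m\neq n$, and proves the lemma. A complete answer therefore had to either flag the printed definition as carrying a typo (sides swapped) and prove the lemma from the corrected definition, or else honestly report that from the printed definition one obtains the side-preserving identity displayed above rather than the one stated. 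Asserting that the computation closes as stated papers over the inconsistency instead of resolving it.
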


\begin{proposition}
Let $\mathcal{A}$ be a commutative $C^{*}$-algebra, $\mathcal{M}$ be a Banach $\mathcal{A}$-bimodule
and $m,n$ be two positive integers with $m\neq n$. If
$\delta$ is an $(m,n)$-Jordan derivation from $\mathcal{A}$ into $\mathcal{M}$, then $\delta$ is
automatically continuous.
\end{proposition}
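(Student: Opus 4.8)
The plan is to prove continuity by the \emph{separating space} technique together with the closed graph theorem. Recall that for the additive (hence $\mathbb{Q}$-linear) map $\delta\colon\mathcal{A}\to\mathcal{M}$ between Banach spaces one sets
$$\mathfrak{S}(\delta)=\{\xi\in\mathcal{M}:\ \text{there exist }A_k\in\mathcal{A}\text{ with }A_k\to0\text{ and }\delta(A_k)\to\xi\},$$
which is always a closed linear subspace of $\mathcal{M}$, and that $\delta$ is continuous precisely when $\mathfrak{S}(\delta)=\{0\}$. Thus the entire problem reduces to showing $\mathfrak{S}(\delta)=\{0\}$.

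First I would record the multiplicative consequence of the defining identity. By the Lemma and the commutativity of $\mathcal{A}$ (so that $AB+BA=2AB$), for all $A,B\in\mathcal{A}$ one has
$$(m+n)\delta(AB)=m\delta(A)B+m\delta(B)A+nA\delta(B)+nB\delta(A).$$
Fix $A$ and read both sides as functions of $B$. The operator $S_A\colon\mathcal{M}\to\mathcal{M}$ defined by $S_A(\xi)=nA\xi+m\xi A$ is bounded because $\mathcal{M}$ is a Banach bimodule, while the remaining terms $B\mapsto m\delta(A)B+nB\delta(A)$ depend continuously on $B$. Hence $B\mapsto(m+n)\delta(AB)-S_A(\delta(B))$ is continuous. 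Now take $\xi\in\mathfrak{S}(\delta)$ and $B_k\to0$ with $\delta(B_k)\to\xi$. Then $AB_k\to0$ while $(m+n)\delta(AB_k)\to S_A(\xi)$, so that $\tfrac{1}{m+n}S_A(\xi)\in\mathfrak{S}(\delta)$. Therefore $\mathfrak{S}(\delta)$ is invariant under every operator $S_A=nL_A+mR_A$, where $L_A,R_A$ denote left and right multiplication by $A$ on $\mathcal{M}$.

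To finish I would exploit the intertwining relation $\delta\circ L^{\mathcal{A}}_A=\tfrac{1}{m+n}\,S_A\circ\delta+(\text{continuous})$, where $L^{\mathcal A}_A$ is multiplication by $A$ on $\mathcal{A}$, feeding it into Sinclair's stability lemma: for a suitable sequence $(A_k)$ the closed subspaces $\overline{S_{A_1}\cdots S_{A_k}\,\mathfrak{S}(\delta)}$ must stabilize. Invoking that $\mathcal{A}\cong C_0(X)$ is a semisimple commutative $C^{*}$-algebra, so that the commuting multiplication operators $L_A$ and $R_A$ exhibit no nonzero common singular behaviour, and using a bounded approximate identity (Cohen factorization) to absorb the possibly non-unital module action, one forces the stabilized subspace, and hence $\mathfrak{S}(\delta)$, to reduce to $\{0\}$.

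The hard part is the last step. Since $\mathcal{M}$ is a genuine, non-symmetric bimodule, $\mathfrak{S}(\delta)$ is only known to be invariant under the \emph{combined} operator $nL_A+mR_A$ rather than under $L_A$ and $R_A$ separately, and disentangling the left and right actions is exactly where the hypothesis $m\neq n$ is expected to enter (for $m=n$ the operator is the symmetric Jordan multiplier and the separation degenerates). Carrying this disentanglement out cleanly, and thereby reducing to the classical automatic-continuity machinery for commutative $C^{*}$-algebras, is the main obstacle I anticipate.
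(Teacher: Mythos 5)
Your setup is correct as far as it goes: the separating space $\mathfrak{S}(\delta)$ is a reasonable invariant to attack, and your derivation that $S_A\bigl(\mathfrak{S}(\delta)\bigr)\subseteq (m+n)\,\mathfrak{S}(\delta)$ for $S_A=nL_A+mR_A$ is sound. But the proof stops exactly where the theorem begins. Everything after that point --- Sinclair's stability lemma, ``no nonzero common singular behaviour,'' Cohen factorization ``forcing'' the stabilized subspace to vanish --- is a list of tools you hope will combine, not an argument, and you concede as much (``the main obstacle I anticipate''). Sinclair's lemma only says that for each fixed sequence $(A_k)$ the chain $\overline{S_{A_1}\cdots S_{A_k}\,\mathfrak{S}(\delta)}$ eventually stabilizes; extracting $\mathfrak{S}(\delta)=\{0\}$ from that requires constructing specific sequences and exploiting specific structure, and --- as you yourself observe --- since $\mathfrak{S}(\delta)$ is only known to be invariant under the mixture $nL_A+mR_A$, not under $L_A$ and $R_A$ separately, none of the off-the-shelf sub-bimodule arguments for ordinary derivations apply. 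That disentanglement \emph{is} the content of the proposition, and it is absent.

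For contrast, the paper's proof never needs $\mathfrak{S}(\delta)$ to be a submodule at all, so it never faces your obstacle. It works with the continuity ideal $\mathcal{J}=\{J\in\mathcal{A}: T\mapsto\delta(JT)\ \text{is continuous}\}$: (i) $\mathcal{J}$ is a closed two-sided ideal (uniform boundedness); (ii) $\delta|_{\mathcal{J}}$ is continuous, by Ringrose's factorization lemma --- a sequence $J_k\in\mathcal{J}$ with $\sum_k\|J_k\|^2\le 1$ factors as $J_k=BC_k$ with $B=(\sum_k J_kJ_k^{\ast})^{1/4}\in\mathcal{J}$ and $\|C_k\|\le 1$, so $\|\delta(J_k)\|=\|D_B(C_k)\|$ stays bounded; (iii) $\mathcal{A}/\mathcal{J}$ is finite dimensional, by Ogasawara's theorem: otherwise one builds mutually orthogonal elements $A_j$ with $A_j^2\notin\mathcal{J}$ and $\|A_j\|\le 1$, chooses $T_j$ with $\|T_j\|\le 2^{-j}$ making $\|\delta(A_j^2T_j)\|$ grow faster than $(m+n)K\|\delta(A_j)\|+j$, and sets $C=\sum_j A_jT_j$, so that $A_jC=A_j^2T_j$ and the boundedness of $T\mapsto nT\delta(C)+m\delta(C)T$ yields a contradiction; (iv) continuity of $\delta$ follows by splitting any $A$ along a finite basis of $\mathcal{A}/\mathcal{J}$ plus a continuous map into $\mathcal{J}$. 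Note where the $C^{\ast}$-structure genuinely enters: through factorization by positive fourth roots and through Ogasawara's finite-dimensionality theorem, not through any semisimplicity-prevents-singularity principle. To salvage your plan you would have to supply an argument of comparable substance precisely at the step you flagged; as written, your proposal establishes only the standard preliminary reductions.
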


\begin{proof}
Let $\mathcal{J}=\{J\in\mathcal{A}:D_{J}(T)=\delta(JT)~\mathrm{is~continuous~for~every}~T~\mathrm{in}~\mathcal{A}\}$.
Since $\mathcal A$ is a commutative algebra and
by Lemma 2.1, we have that
$$nJ\delta(T)+m\delta(T)J=(m+n)\delta(JT)-m\delta(J)T-nT\delta(J)$$
for every $T$ in $\mathcal{A}$ and every $J$ in $\mathcal{J}$. Then
$$\mathcal{J}=\{J\in\mathcal{A}:S_{J}(T)=nJ\delta(T)+m\delta(T)J~\mathrm{is~continuous~every}~T~\mathrm{in}~\mathcal{A}\}.$$

In the following we divide the proof into four steps.

First, we show that $\mathcal{J}$ is a closed two-sided ideal in $\mathcal{A}$. Clearly $\mathcal{J}$
is a right ideal in $\mathcal{A}$. Moreover, for each $A$, $T$ in $\mathcal{A}$ and every $J$ in $\mathcal{J}$, we have that
$$(m+n)\delta(AJT)=m\delta(A)JT+m\delta(JT)A+nA\delta(JT)+nJT\delta(A).$$
Thus $D_{AJ}(T)$ is continuous for every $T$ in $\mathcal{A}$ and $\mathcal{J}$ is also a left ideal in $\mathcal{A}$.

Suppose that $\{J_k\}_{k\geqslant1}\subseteq\mathcal{J}$ and $J\in\mathcal{A}$ such that $\lim\limits_{k \rightarrow \infty}J_k=J$.
Then every $S_{J_{k}}$ is a continuous linear operator; hence we obtain that
$$S_J(T)=nJ\delta(T)+m\delta(T)J=\lim_{k \rightarrow \infty}nJ_k\delta(T)+m\delta(T)J_k=\lim_{k \rightarrow \infty}S_{J_k}(T)$$
for every $T$ in $\mathcal{A}$. By the principle of uniform
boundedness, we have that $S_J$ is norm continuous and
$J\in\mathcal{A}$. Thus, $\mathcal{J}$ is a closed two-sided ideal in
$\mathcal{A}$.

Next, we show that the restriction $\delta|_{\mathcal{J}}$ is norm continuous. Suppose the contrary.
 We can choose $\{J_k\}_{k\geqslant1}\subseteq\mathcal{J}$ such that
 $$\sum_{k=1}^{\infty}{\|J_k\|}^2\leqslant
  1~\mathrm{and}~\|\delta(J_k)\|\rightarrow\infty,~\mathrm{when}~k\rightarrow\infty.$$
Let $B=(\sum_{k=1}^{\infty} J_kJ_k^\ast)^{1/4}$. Then $B$ is a
positive element in $\mathcal{J}$ with $\|B\|\leqslant1$. By \cite[Lemma 1]{Ringrose}
we know that $J_k=BC_k$ for some
$\{C_k\}\subseteq\mathcal{J}$ with $\|C_k\|\leqslant1$, and
$$\|D_B(C_k)\|=\|\delta(BC_k)\|=\|\delta(J_k)\|\rightarrow\infty,~\mathrm{when}~k\rightarrow\infty.$$
This leads to a contradiction. Hence $\delta|_{\mathcal{J}}$ is norm
continuous.

In the following, we show that the $C^{\ast}$-algebra $\mathcal{A}/\mathcal{J}$ is finite-dimensional. Otherwise,
by \cite{T. Ogasawara} we know that $\mathcal{A}/\mathcal{J}$ has an infinite-dimensional abelian $C^{*}$-subalgebra
$\tilde{\mathcal{A}}$. Since the carrier space $X$ of $\tilde{\mathcal{A}}$ is infinite, it follows easily
from the isomorphism
between $\tilde{\mathcal{A}}$ and $C_{0}(X)$ that there is a positive element $H$ in $\tilde{\mathcal{A}}$
whose spectrum is infinite. Hence we can choose some nonnegative continuous mappings $f_{1},~f_{2},\ldots,$
 defined on the positive real axis such that
$$f_jf_k=0~\mathrm{if}~j\neq k~\mathrm{and}~f_j(H)\neq 0~(j=1,~2,\ldots).$$
Let $\varphi$ be a natural mapping from $\mathcal{A}$ into $\mathcal{A}/\mathcal{J}$. Then there exists
a positive element $K$ in $\mathcal{A}$ such that $\varphi(K)=H$. Denote $A_{j}=f_{j}(K)$ for every $j$.
Then we have that $A_{j}\in\mathcal{A}$ and
$$\varphi(A_{j}^{2})=\varphi(f_{j}(K))^{2}=[f_{j}(\varphi(K))]^{2}=f_j(H)^{2}\neq0.$$
It follows that $A_{j}^{2}\notin\mathcal{J}$ and $A_jA_k=0$ if $j\neq k$. If we replace $A_{j}$ by an appropriate scalar multiple, we may suppose that $\|A_{j}\|\leqslant1$.
By $A_j^2\notin\mathcal{J}$, we have that $D_{A_j^{2}}$ is unbounded. Thus, we can choose $T_j\in\mathcal{A}$ such that
$$\|T_j\|\leqslant2^{-j}~\mathrm{and}~(m+n)\|\delta(A_j^2T_j)\|\geqslant (m+n)K\|\delta(A_j)\|+j,$$
where $K=\mathrm{max}\{M,N\}$, $M$ is the bound of the linear mapping
$$(T,M)\rightarrow MT:~\mathcal{A}\times\mathcal{M}\rightarrow\mathcal{A}$$
and $N$ is the bound of the linear mapping
$$(T,M)\rightarrow TM:~\mathcal{A}\times\mathcal{M}\rightarrow\mathcal{A}.$$
Let $C=\sum\limits_{j\geqslant 1}A_jT_j$. Then we have that $\|C\|\leqslant1$ and $A_jC=A_j^2T_j$, and so
\begin{align*}
  \|nA_j\delta(C)+m\delta(C)A_j\| &= \|(m+n)\delta(A_jC)-m\delta(A_j)C-nC\delta(A_j)\| \\
                   &\geqslant\|(m+n)\delta(A_j^2T_j)\|-mM\|\delta(A_j)\|\|C\|-nN\|C\|\|\delta(A_j)\|\\
                   &\geqslant\|(m+n)\delta(A_j^2T_j)\|-(m+n)K\|\delta(A_j)\|\|C\|\\
                   &\geqslant (m+n)K\|\delta(A_j)\|+j-(m+n)K|\delta(A_j)\| =j.
\end{align*}
However, this is impossible because, in fact, $\|A_{j}\|\leqslant1$ and the linear mapping
$$T\rightarrow nT\delta(C)+m\delta(C)T:~\mathcal{A}\rightarrow\mathcal{M}$$
is bounded. Thus we prove that $\mathcal{A}/\mathcal{J}$ is finite-dimensional.

Finally we show that $\delta$ is a continuous linear mapping from $\mathcal A$
into $\mathcal M$.
Since $\mathcal{A}/\mathcal{J}$ is finite-dimensional, we can choose
some elements $A_{1},A_{2},\cdots,A_{r}$ in $\mathcal{A}$
such that $\varphi(A_{1}),\varphi(A_{2}),\cdots,\varphi(A_{r})$ is a basis for the
Banach space $\mathcal{A}/\mathcal{J}$, and let
$\tau_{1},\tau_{2},\cdots,\tau_{r}$ be continuous linear functional on $\mathcal{A}/\mathcal{J}$
such that
$$\tau_{j}(\varphi(A_{k}))=1~\mathrm{when}~j=k~\mathrm{and}~\tau_{j}(\varphi(A_{k}))=0~\mathrm{when}~j\neq k.$$
For every $A$ in $\mathcal A$, we have that
$$\varphi(A)=\sum\limits_{k=1}^{r}c_k\varphi(A_{k}),$$
and the scalars $c_1,c_2,\cdots,c_r$ are determined by $c_j=\tau_{j}(\varphi(\mathcal A))=\rho_{j}(\mathcal A)$,
where $\rho_{j}$ is continuous linear functional $\tau_{j}\circ\varphi$ on $\mathcal{A}$. Since
$$\varphi(A)=\sum\limits_{j=1}^{r}\rho_{j}(A)\varphi(A_{j}),$$
we can obtain that
$\varphi(A-\sum\limits_{j=1}^{r}\rho_{j}(A)A_{j})=0$.
It follows that
$A\rightarrow A-\sum\limits_{j=1}^{r}\rho_{j}(A)A_{j}$
is a continuous mapping from $\mathcal{A}$ into $\mathcal{J}$.
Since $\delta|_{\mathcal{J}}$ is continuous and
$\rho_{1},\rho_{2},\cdots,\rho_{r}$ are continuous, it implies that
$$A\rightarrow[\delta(A)-\sum\limits_{j=1}^{r}\rho_{j}(A)\delta(A_{j})]+\sum\limits_{j=1}^{r}\rho_{j}(A)\delta(A_{j})=\delta(A)$$
is continuous from $\mathcal A$ into $\mathcal M$.
\end{proof}

Given an element $A$ of the algebra $B(\mathcal{H})$ of all bounded linear operators on
a Hilbert space $\mathcal{H}$, we denote by $\mathcal{G}(A)$
the $C^{*}$-algebra generated by $A$. For any self-adjoint subalgebra
$\mathcal{A}$ of $B(\mathcal{H})$, if $\mathcal{G}(B)\subseteq\mathcal{A}$
for every self-adjoint element $B\in\mathcal{A}$, then we call $\mathcal{A}$
\emph{locally closed}. Obviously, every $C^{*}$-algebra is locally closed
and we have the following result.

\begin{lemma}\cite[Corollary 1.2]{Cuntz}
Let $\mathcal{A}$ be a locally closed subalgebra of $B(\mathcal{H})$, $Y$ be a
locally convex linear space and $\psi$ be a linear mapping from $\mathcal{A}$
into $Y$. If $\psi$ is continuous from every commutative self-adjoint subalgebra
of $\mathcal{A}$ into $Y$, then $\psi$ is continuous.
\end{lemma}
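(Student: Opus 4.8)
The plan is to derive continuity of $\psi$ from a boundedness statement about a single seminorm. Since $Y$ is locally convex, its topology is generated by a family of continuous seminorms, and a linear map into $Y$ is continuous exactly when $p\circ\psi$ is continuous for each such seminorm $p$. Fixing $p$ and writing $q=p\circ\psi$, I would observe that $q$ is a seminorm on $\mathcal{A}$ whose restriction to every commutative self-adjoint subalgebra is bounded on the unit ball, this being precisely the content of the hypothesis that $\psi$ is continuous on each such subalgebra. The whole problem then reduces to showing that this forces $q$ to be bounded on the unit ball of $\mathcal{A}$, which is a self-contained boundedness question no longer involving $Y$.

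The next step is to reduce to positive contractions. Because $\mathcal{A}$ is self-adjoint, every $T\in\mathcal{A}$ splits as $T=\mathrm{Re}\,T+i\,\mathrm{Im}\,T$ with $\mathrm{Re}\,T,\mathrm{Im}\,T$ self-adjoint and of norm at most $\|T\|$, whence $q(T)\le q(\mathrm{Re}\,T)+q(\mathrm{Im}\,T)$. For a self-adjoint $B$ the parts $B^{\pm}=f^{\pm}(B)$, where $f^{\pm}(t)=\max(\pm t,0)$ are continuous and vanish at $0$, lie in $\mathcal{G}(B)\subseteq\mathcal{A}$ by local closedness and satisfy $B=B^{+}-B^{-}$ with $\|B^{\pm}\|\le\|B\|$. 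Hence it suffices to exhibit a single constant $C$ with $q(P)\le C$ for every positive $P\in\mathcal{A}$ with $\|P\|\le1$.

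The mechanism I would exploit is that orthogonality forces commutativity: if $\{P_k\}$ are positive with $P_jP_k=0$ for $j\neq k$, then they pairwise commute, the self-adjoint subalgebra they generate inside $\mathcal{A}$ is commutative, and the hypothesis yields a single bound $\sup_k q(P_k)<\infty$ once $\|P_k\|\le1$. Assuming for contradiction that $q$ is unbounded on the positive contractions, I would select positive $P_k$ with $\|P_k\|\le1$ and $q(P_k)\to\infty$ as fast as needed, and then try to manufacture from them one mutually orthogonal family on which $q$ still blows up, contradicting the previous sentence. This last step is where the real difficulty lies, since a priori the witnesses $P_k$ lie in different commutative subalgebras and need not commute, so the uniform bound for orthogonal families does not apply to them directly. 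I would attempt to resolve it in the spirit of the proof of Proposition 2.1: pass to a single positive dominating element $B$ built, as in the Ringrose factorization used there, from a suitable series formed out of the $P_k$, arrange that its spectrum is infinite, and then extract inside $\mathcal{G}(B)$ an orthogonal family $\{f_j(B)\}$ using continuous functions $f_j$ with pairwise disjoint supports, transferring the unboundedness of $q$ onto this commuting family. Carrying out this orthogonalization, together with the bookkeeping guaranteeing that $q$ genuinely grows along $\{f_j(B)\}$, is the main obstacle; once the contradiction is reached, $q$ is bounded on positive contractions, the reductions above make each $q=p\circ\psi$ continuous, and therefore $\psi$ is continuous.
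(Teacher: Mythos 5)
Your preliminary reductions are all sound: passing to the seminorms $q=p\circ\psi$, splitting a general element into real/imaginary and then positive/negative parts (local closedness does guarantee $f^{\pm}(B)\in\mathcal{G}(B)\subseteq\mathcal{A}$), and the observation that a pairwise orthogonal family of positive contractions generates a commutative self-adjoint subalgebra of $\mathcal{A}$, so that $q$ is uniformly bounded on any such family. But the step you defer --- manufacturing, from non-commuting positive contractions $P_k$ with $q(P_k)\to\infty$, a single orthogonal family on which $q$ still blows up --- is not a technical loose end to be filled in later: it is the entire content of the result, which the paper itself does not prove but quotes as Corollary 1.2 of Cuntz's Math.\ Ann.\ paper. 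What you have written is a correct reduction of the theorem to its hard core, followed by a sketch of that core which, as proposed, cannot work.

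Here is concretely why the sketched mechanism fails. First, the Ringrose-type factorization you want to import from the proof of Proposition 2.2 has no traction here. In that proof, writing $J_k=BC_k$ is useful only because $\delta(BC_k)=D_B(C_k)$ with $D_B$ \emph{continuous}, and the continuity of $D_B$ (membership of $B$ in the ideal $\mathcal{J}$) is extracted from the $(m,n)$-Jordan derivation identity via Lemma 2.1, i.e.\ from a relation between $\delta$ of a product and products involving $\delta$. Your $q=p\circ\psi$ satisfies no identity of this kind --- $\psi$ is merely linear --- so a factorization $P_k=BC_k$ yields no estimate on $q(P_k)$ in either direction. Second, the target family $\{f_j(B)\}$ lies inside the single commutative self-adjoint algebra $\mathcal{G}(B)$, on whose unit ball $q$ is bounded by the standing hypothesis, while nothing links the size of $q(P_k)$ (the $P_k$ do not commute with one another and are not functions of $B$) to the size of $q(f_j(B))$: largeness of a general seminorm does not propagate through operator domination such as $P_k\le 2^kB$, nor through norm estimates. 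This is precisely the non-commutative difficulty Cuntz's argument is designed to overcome, and it is overcome by different means. Third, $\mathcal{A}$ is only locally closed, not norm-complete (an increasing union of $C^{*}$-subalgebras of $B(\mathcal{H})$ is locally closed); hence the dominating element $B$, defined by an infinite series built from the $P_k$, need not belong to $\mathcal{A}$ at all, in which case $\mathcal{G}(B)$ is not a subalgebra of $\mathcal{A}$ and $q(f_j(B))$ is not even defined. So the proposal, as it stands, is a reduction plus an unproved (and, along the suggested route, unprovable) key step, not a proof.
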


By Proposition 2.2 and Lemma 2.3, we can obtain the following corollary.

\begin{corollary}
Let $\mathcal{A}$ be a $C^{*}$-algebra, $\mathcal{M}$ be a Banach $\mathcal{A}$-bimodule
and $m,n$ be two positive integers with $m\neq n$. If
$\delta$ is an $(m,n)$-Jordan derivation from $\mathcal{A}$ into $\mathcal{M}$, then $\delta$ is
automatically continuous.
\end{corollary}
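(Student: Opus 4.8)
The plan is to deduce the statement directly from Proposition 2.2 and Lemma 2.3, by reducing the global continuity question to continuity on commutative self-adjoint subalgebras. First I would invoke the Gelfand--Naimark theorem to fix a faithful representation of $\mathcal{A}$ as a $C^{*}$-subalgebra of $B(\mathcal{H})$ for some Hilbert space $\mathcal{H}$; as observed in the text preceding Lemma 2.3, every $C^{*}$-algebra is locally closed, so $\mathcal{A}$ meets the standing hypothesis of that lemma. I would then take $Y=\mathcal{M}$, which, being a Banach space, is in particular a locally convex linear space, so the ambient setting of Lemma 2.3 is in place with $\psi=\delta$.

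The key step is to verify the hypothesis of Lemma 2.3, namely that $\delta$ is continuous on every commutative self-adjoint subalgebra. Let $\mathcal{B}$ be such a subalgebra and let $\overline{\mathcal{B}}$ denote its norm closure; since $\mathcal{A}$ is norm closed, $\overline{\mathcal{B}}$ is a commutative $C^{*}$-subalgebra of $\mathcal{A}$. The defining identity $(m+n)\delta(A^{2})=2mA\delta(A)+2n\delta(A)A$ holds for all $A\in\mathcal{A}$, hence a fortiori for all $A\in\overline{\mathcal{B}}$, and $\mathcal{M}$ is a Banach $\overline{\mathcal{B}}$-bimodule under the restricted action. Consequently $\delta|_{\overline{\mathcal{B}}}$ is an $(m,n)$-Jordan derivation from the commutative $C^{*}$-algebra $\overline{\mathcal{B}}$ into the Banach $\overline{\mathcal{B}}$-bimodule $\mathcal{M}$, and Proposition 2.2 applies to give that $\delta|_{\overline{\mathcal{B}}}$, and therefore its further restriction $\delta|_{\mathcal{B}}$, is norm continuous.

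Having established continuity on each commutative self-adjoint subalgebra of the locally closed algebra $\mathcal{A}$, I would finish by a single application of Lemma 2.3, which upgrades this to continuity of $\delta$ on all of $\mathcal{A}$. The bulk of the argument is thus a formal assembly of the two preceding results.

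I do not expect a serious obstacle here, since the corollary is designed to follow from Proposition 2.2 and Lemma 2.3; the only points requiring care are essentially bookkeeping. One must check that passing to the closure $\overline{\mathcal{B}}$ genuinely yields an $(m,n)$-Jordan derivation into a \emph{Banach} bimodule, so that Proposition 2.2 is literally applicable, and one must reconcile the fact that an $(m,n)$-Jordan derivation is \emph{a priori} only additive with the linearity required of $\psi$ in Lemma 2.3 --- this is handled by regarding $\mathcal{A}$ and $\mathcal{M}$ as real Banach spaces, where the criterion applies to the (real-linear) map $\delta$. These are precisely the matching-of-hypotheses details, and none of them involves a substantive new idea beyond the two results already proved.
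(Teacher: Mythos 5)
Your proposal is correct and follows essentially the same route as the paper's own proof: both reduce the problem, via Lemma 2.3 (with $Y=\mathcal{M}$ and $\psi=\delta$), to continuity on each commutative self-adjoint subalgebra $\mathcal{B}$, and then obtain that continuity by applying Proposition 2.2 to the norm closure $\overline{\mathcal{B}}$, which is a commutative $C^{*}$-algebra. The bookkeeping points you flag (that $\mathcal{M}$ restricts to a Banach $\overline{\mathcal{B}}$-bimodule, and the additivity-versus-linearity issue for $\delta$) are passed over silently in the paper as well, so there is no substantive difference between the two arguments.
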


\begin{proof}
By Lemma 2.3, it is sufficient to prove that $\delta$ is continuous from every
commutative self-adjoint subalgebra $\mathcal{B}$ of ${A}$ into $\mathcal{M}$.
It is clear that the norm closure $\bar{\mathcal{B}}$ of $\mathcal{B}$ is a
commutative $C^{*}$-algebra. Thus, we only need to show that the restriction
$\delta|_{\bar{\mathcal{B}}}$ is continuous.

By Proposition 2.2 we know that $\delta|_{\bar{\mathcal{B}}}$ is
automatically continuous. Hence $\delta$ is continuous on
$\mathcal{B}$.
\end{proof}

By Corollary 2.4 and \cite[Theorem 2.3]{anguangyu1}, we have the following theorem immediately.

\begin{theorem}
Let $\mathcal{A}$ be a $C^{*}$-algebra, $\mathcal{M}$ be a Banach $\mathcal{A}$-bimodule
and $m,n$ be two positive integers with $m\neq n$. If
$\delta$ is an $(m,n)$-Jordan derivation from $\mathcal{A}$ into $\mathcal{M}$, then $\delta$ is identically equal to zero.
\end{theorem}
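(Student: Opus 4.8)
The plan is to split the statement into an analytic reduction and an algebraic core. By Corollary 2.4 the mapping $\delta$ is automatically norm continuous, so it suffices to prove that \emph{every continuous $(m,n)$-Jordan derivation with $m\neq n$ from a $C^{*}$-algebra into its Banach bimodule vanishes}; this is precisely the content supplied by \cite[Theorem 2.3]{anguangyu1}. I would therefore structure the proof as: invoke Corollary 2.4 to obtain continuity, then apply that result to conclude $\delta=0$. The substantive mathematics lies in understanding why continuity forces $\delta=0$, which I sketch below.

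The algebraic heart is the behaviour of $\delta$ on idempotents. Let $E=E^{2}$. The defining identity applied to $E$ reads
\[
(m+n)\delta(E)=2mE\delta(E)+2n\delta(E)E .
\]
Multiplying this relation on the left by $E$, on the right by $E$, and on both sides by $E$, and using that $\mathcal{M}$ is a complex module (so the nonzero integers $m+n$ and $m-n$ act as invertible scalars), one first obtains $E\delta(E)E=0$ from the two-sided product, and then $E\delta(E)=0$ and $\delta(E)E=0$ from the one-sided products — here the hypothesis $m\neq n$ enters in an essential way. Substituting these back into the displayed identity yields $\delta(E)=0$ for every idempotent $E$, in particular for every self-adjoint projection.

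The main obstacle is the passage from projections to the whole algebra, since a general $C^{*}$-algebra (for instance $C([0,1])$) carries no nontrivial idempotents at all. The natural route around this, and the mechanism I expect \cite[Theorem 2.3]{anguangyu1} to encode, is to pass to the second dual: the bidual $\mathcal{A}^{**}$ is a von Neumann algebra, the continuous map $\delta$ extends to a weak$^{*}$-continuous map $\delta^{**}\colon\mathcal{A}^{**}\to\mathcal{M}^{**}$, and weak$^{*}$-density of $\mathcal{A}$ together with separate weak$^{*}$-continuity of the Arens products shows that $\delta^{**}$ is again an $(m,n)$-Jordan derivation. The idempotent computation now applies to all projections of $\mathcal{A}^{**}$; since in a von Neumann algebra the linear span of projections is norm dense (every self-adjoint element is a norm limit of finite linear combinations of its spectral projections), continuity of $\delta^{**}$ forces $\delta^{**}=0$, whence $\delta=0$. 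The delicate point throughout is verifying that the extension to the bidual genuinely preserves the $(m,n)$-Jordan derivation identity; once that is secured, the projection argument finishes the proof.
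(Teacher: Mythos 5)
Your proposal takes exactly the paper's route: the paper's entire proof of this theorem is to combine Corollary 2.4 (automatic continuity) with the cited result \cite[Theorem 2.3]{anguangyu1}, which is precisely the two-step structure you describe. Your additional sketch of the idempotent computation and the bidual extension is speculation about the inside of that citation, which the paper treats as a black box, so it does not alter the comparison (and your flagged ``delicate point'' about transferring the $(m,n)$-Jordan identity to $\mathcal{A}^{**}$ is indeed where the real work of that cited theorem would lie).
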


\section{$(m,n)$-Jordan derivable mappings at zero on some algebras}

In \cite{anguangyu4}, we give a characterization of $(m,n)$-Jordan derivable mappings
at zero on some algebras when $n=0$ and $m\neq 0$. In this section,
we assume that $m,n$ are two positive
numbers and study the propositions of $(m,n)$-Jordan derivable mappings at zero.

Let $\mathcal A$ be an algebra. $\mathfrak{J}(\mathcal{A})$ denotes the subalgebra of $\mathcal A$ generated algebraically by all
idempotents in $\mathcal{A}$.

\begin{lemma}\cite[Lemma 2.2]{anguangyu4}
Let $\mathcal A$ be a unital algebra and $\mathcal X$
be a vector space.
If $\phi$ is a bilinear mapping from $\mathcal{A}\times\mathcal{A}$ into $\mathcal{X}$ such that
for each $A,B$ in $\mathcal A$,
$$AB=BA=0\Rightarrow\phi(A,B)=0,$$
then we have that
$$\phi(A,J)+\phi(J,A)=\phi(AJ,I)+\phi(I,JA)$$
for every $A$ in $\mathcal{A}$ and every $J$ in $\mathfrak{J}(\mathcal{A})$.
\end{lemma}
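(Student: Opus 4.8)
The plan is to introduce the bilinear ``defect''
$$\Psi(A,J)=\phi(A,J)+\phi(J,A)-\phi(AJ,I)-\phi(I,JA),$$
so that the assertion becomes $\Psi(A,J)=0$ for every $A\in\mathcal A$ and every $J\in\mathfrak J(\mathcal A)$. Since $\phi$ is bilinear and the maps $J\mapsto AJ$ and $J\mapsto JA$ are linear, the map $J\mapsto\Psi(A,J)$ is linear for each fixed $A$. As $\mathfrak J(\mathcal A)$ is generated as an algebra by idempotents, each of its elements is a linear combination of products of idempotents, so it suffices to establish $\Psi(A,J)=0$ when $J$ is an idempotent and then to propagate this through products of idempotents. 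The only structural input available is the hypothesis $XY=YX=0\Rightarrow\phi(X,Y)=0$, so the entire argument must manufacture vanishing products out of $A$ and the idempotents.

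For the base case, fix an idempotent $P$ and write $P'=I-P$. First I would use the decomposition $A=PAP+PAP'+P'AP+P'AP'$ together with $AP=PAP+P'AP$ and $PA=PAP+PAP'$ to split $\Psi(A,P)$ into four pieces by linearity in $A$. The two diagonal pieces are immediate: since $(PAP)P'=P'(PAP)=0$ we get $\phi(PAP,P')=\phi(P',PAP)=0$, whence $\phi(PAP,P)=\phi(PAP,I)$ and $\phi(P,PAP)=\phi(I,PAP)$, which cancels against the matching terms $-\phi(PAP,I)-\phi(I,PAP)$; and since $(P'AP')P=P(P'AP')=0$ the piece $P'AP'$ contributes $\phi(P'AP',P)+\phi(P,P'AP')=0$ directly. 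After replacing $P$ by $I-P'$ once more, the two off-diagonal pieces collapse to the single identities $\phi(PAP',P)=\phi(P',PAP')$ and $\phi(P,P'AP)=\phi(P'AP,P')$.

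These two identities are the heart of the base case, and I would obtain them by a transvection trick. The element $Q=P+PAP'$ is idempotent with complement $I-Q=P'-PAP'$, because $(PAP')^2=0$, $(PAP')P=0$ and $P'(PAP')=0$. As $Q(I-Q)=(I-Q)Q=0$, applying the hypothesis to this complementary pair and expanding, while using $\phi(P,P')=\phi(P',P)=0$ and $\phi(PAP',PAP')=0$, yields precisely $\phi(PAP',P)=\phi(P',PAP')$; the symmetric choice $R=P+P'AP$ gives the other identity. This proves $\Psi(A,P)=0$ for every idempotent $P$ and every $A$.

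It then remains to pass from idempotents to all of $\mathfrak J(\mathcal A)$. By linearity in $J$ it is enough to treat a product $J=P_1\cdots P_r$ of idempotents, which I would attempt by induction on $r$, showing that the set of good elements $\{J:\Psi(A,J)=0 \text{ for all } A\in\mathcal A\}$, a subspace containing every idempotent, is closed under multiplication by an idempotent on either side. I expect this closure step to be the main obstacle: with a product there is no single complementary idempotent to exploit, so in place of the clean transvection argument one must decompose $A$ together with the products $APJ$, $PJA$, and the like with respect to the outermost idempotent, and then combine the inductive hypothesis, the base-case identity for that idempotent, and the zero-product hypothesis applied to suitable transvection idempotents. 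Arranging the bookkeeping so that all the resulting cross terms cancel is the delicate part of the argument.
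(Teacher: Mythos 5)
First, a point of reference: the paper never proves this lemma itself --- it is quoted verbatim from \cite[Lemma 2.2]{anguangyu4} --- so your proposal has to be measured against what a complete proof requires rather than against an argument in the text. Your base case is correct and complete: writing $P'=I-P$ and $\Psi(A,B)=\phi(A,B)+\phi(B,A)-\phi(AB,I)-\phi(I,BA)$, the Peirce splitting of $A$, the direct zero-product pairs $(PAP,P')$, $(P',PAP)$, $(P'AP',P)$, $(P,P'AP')$, and the two transvection idempotents $P+PAP'$ and $P+P'AP$ do yield $\Psi(A,P)=0$ for every idempotent $P$ and every $A$, exactly as you computed. Moreover, since $PXP'=(P+PXP')-P$ and $P'XP=(P+P'XP)-P$ are differences of idempotents, the base case already gives $\Psi(A,PXP')=\Psi(A,P'XP)=0$ for all $A,X$, hence $\Psi(A,PY)=\Psi(A,YP)=\Psi(A,PYP)$ for all $A,Y$.

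The genuine gap is the step you yourself flag as ``the main obstacle'': passing from idempotents to products of idempotents. This is not bookkeeping; it is the actual content of ``generated algebraically by idempotents'', since the base case only covers the linear span of the idempotents, which is in general smaller than $\mathfrak{J}(\mathcal{A})$, and the plan you sketch (decomposing $APJ$, $PJA$, etc.) never produces the identity that makes the induction close. The missing ingredient is one further family of zero-product pairs, namely the \emph{diagonal--diagonal} pairs $(PAP,\,P'YP')$ with $Y$ arbitrary: both products vanish, so $\Psi(PAP,P'YP')=0$, and expanding $P'YP'=Y-PY-YP+PYP$ and using $\Psi(PAP,PY)=\Psi(PAP,YP)=\Psi(PAP,PYP)$ gives the reduction
\[
\Psi(PAP,Y)=\Psi(PAP,PYP)\quad(\ast)
\]
for every idempotent $P$ and all $A,Y$. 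With $(\ast)$ the induction on the length of $J=PK$ ($P$ idempotent, $K$ a product of one fewer idempotents) closes: $\Psi(A,PK)=\Psi(A,PKP)$ by the base case; splitting the first argument, $\Psi(P'AP',PKP)=0$ directly from the hypothesis, the two off-diagonal pieces $\Psi(PAP',PKP)$ and $\Psi(P'AP,PKP)$ vanish after the further observation that $\phi(Z,I)=\phi(I,Z)$ for every $Z\in P\mathcal{A}P'\cup P'\mathcal{A}P$ (again a consequence of the transvection identities, and not something your outline supplies), and finally $\Psi(PAP,PKP)=\Psi(PAP,K)=0$ by $(\ast)$ and the inductive hypothesis. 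So your proposal, as written, proves the statement only on the span of the idempotents; the lemma for all of $\mathfrak{J}(\mathcal{A})$ requires precisely the identities above, which are absent from it.
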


By Lemma 3.1, we have the following result.

\begin{proposition}
Let $\mathcal{A}$ be a unital algebra, $\mathcal{M}$ be a unital $\mathcal{A}$-bimodule
and $m,n$ be two positive integers with $m\neq n$.
If $\delta$ is an $(m,n)$-Jordan derivable mapping at zero from $\mathcal A$ into $\mathcal M$
such that $\delta(I)=0$, then
for every $A$ in $\mathcal{A}$ and every idempotent $P$ in $\mathcal{A}$, we have
the following two statements:\\
$(1)$ $\delta(P)=0;$\\
$(2)$ $\delta(PA)=\delta(AP)=\delta(A)P=P\delta(A).$
\end{proposition}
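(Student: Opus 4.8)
The plan is to repackage the derivable-at-zero hypothesis into a single biadditive map and feed it into Lemma 3.1. Concretely, I would set
$$\phi(A,B)=(m+n)\delta(AB+BA)-2m\delta(A)B-2m\delta(B)A-2nA\delta(B)-2nB\delta(A),$$
which is biadditive, manifestly symmetric in $A$ and $B$, and which vanishes precisely when $AB=BA=0$: in that case $AB+BA=0$ gives $\delta(AB+BA)=0$, while the defining identity of an $(m,n)$-Jordan derivable mapping at zero kills the remaining four terms. Hence $\phi$ meets the hypothesis of Lemma 3.1. The first payoff comes from $\delta(I)=0$ together with $\mathcal{M}$ being unital: a direct substitution shows $\phi(X,I)=\phi(I,X)=0$ for every $X$ (the coefficient $2(m+n)\delta(X)$ produced by $XI+IX=2X$ cancels against $2m\delta(X)+2n\delta(X)$). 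Therefore the right-hand side of the conclusion of Lemma 3.1 vanishes, and symmetry forces $2\phi(A,J)=0$, so $\phi(A,J)=0$ for every $A\in\mathcal{A}$ and every $J\in\mathfrak{J}(\mathcal{A})$; this applies in particular to every idempotent $P$ and to $I-P$ (division by nonzero integers being legitimate since $\mathcal{M}$ is a complex vector space).

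For statement $(1)$, I would specialize to $\phi(P,P)=0$ with $P^2=P$, which reads $(m+n)\delta(P)=2m\delta(P)P+2nP\delta(P)$. Multiplying this identity by $P$ on both sides gives $(m+n)P\delta(P)P=2(m+n)P\delta(P)P$, hence $P\delta(P)P=0$; multiplying on the left only and on the right only then yields $(m-n)P\delta(P)=2mP\delta(P)P=0$ and $(n-m)\delta(P)P=2nP\delta(P)P=0$. This is exactly where $m\neq n$ is decisive: it forces $P\delta(P)=\delta(P)P=0$, and substituting back gives $(m+n)\delta(P)=0$, so $\delta(P)=0$.

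For statement $(2)$, the crucial observation is that for every $A$ the elements $P+PA(I-P)$ and $P+(I-P)AP$ are idempotents, since their squares collapse using $(I-P)P=0$. By $(1)$ and additivity their $\delta$-images vanish, which forces $\delta(PA(I-P))=\delta((I-P)AP)=0$, that is, $\delta(PA)=\delta(AP)=\delta(PAP)$. It remains to identify this common value with $\delta(A)P$ and $P\delta(A)$. I would write $\delta(A)=\delta(PAP)+\delta((I-P)A(I-P))$ (the two off-diagonal pieces having just been shown to be zero) and set $E=\delta(PAP)$, $D=\delta((I-P)A(I-P))$. Running the one- and two-sided multiplication argument of $(1)$ on $\phi(PAP,P)=0$ and on $\phi((I-P)A(I-P),I-P)=0$ — using only $m,n>0$ — gives the diagonal identities $E=PEP$ and $D=(I-P)D(I-P)$. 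These in turn yield $EP=PE=E$ and $DP=PD=0$, whence $\delta(A)P=EP+DP=E=PE+PD=P\delta(A)=\delta(PAP)$, completing $(2)$.

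The main obstacle is the passage from the symmetric combination $\delta(AP+PA)$, which is all that $\phi(A,P)=0$ controls directly, to the individual values $\delta(PA)$ and $\delta(AP)$; the idempotent-perturbation trick $P\mapsto P+PA(I-P)$ is precisely what breaks this symmetry and is the heart of the argument. Once that is in place, the surrounding Peirce-type bookkeeping is routine, with $m\neq n$ supplying $(1)$ and the positivity of $m$ and $n$ supplying the diagonal identities in $(2)$.
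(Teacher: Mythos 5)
Your proof is correct, and its second half follows a genuinely different route from the paper's. Both arguments invoke Lemma 3.1, but with different maps: the paper applies it to $\phi(A,B)=m\delta(A)B+m\delta(B)A+nA\delta(B)+nB\delta(A)$ and, having first proved $\delta(P)=0$ directly from the orthogonal pair $(P,I-P)$, extracts only the symmetrized identity $(m+n)\delta(AP+PA)=2m\delta(A)P+2nP\delta(A)$; your choice of the full ``defect'' map is symmetric and annihilates pairs containing $I$, so Lemma 3.1 gives you the stronger master identity $\phi(A,J)=0$ for every $A\in\mathcal{A}$ and $J\in\mathfrak{J}(\mathcal{A})$, i.e.\ the $(m,n)$-derivable identity propagates from zero products to arbitrary pairs with one entry in $\mathfrak{J}(\mathcal{A})$. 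Part $(1)$ then comes from the specialization $A=J=P$, and from there your multiplication-by-$P$ argument is identical to the paper's. The real divergence is in $(2)$: the paper breaks the symmetry of $\delta(AP+PA)$ computationally, through a long chain of substitutions ($A\mapsto AP$, $A\mapsto A-AP$, $A\mapsto PA$) and one-sided multiplications that accumulate a dozen auxiliary identities; you break it structurally, noting that $P+PA(I-P)$ and $P+(I-P)AP$ are idempotents, so $(1)$ instantly kills the off-diagonal Peirce corners, $\delta(PA(I-P))=\delta((I-P)AP)=0$, and then the corner identities $E=PEP$ and $D=(I-P)D(I-P)$ (obtained from $\phi(PAP,P)=0$ and $\phi((I-P)A(I-P),I-P)=0$, and checked correctly: left multiplication uses $m>0$, right multiplication uses $n>0$, and back-substitution closes the loop) finish in a few lines. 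Your version is shorter, and it has the conceptual merit of isolating exactly where $m\neq n$ is needed (only in $(1)$) as opposed to where mere positivity of $m$ and $n$ suffices. One caveat, shared equally with the paper: Lemma 3.1 is stated for bilinear maps while $\delta$ is only assumed additive, so your $\phi$ (like the paper's) is a priori only biadditive; since the paper makes exactly the same gloss, this is not a defect specific to your argument.
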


\begin{proof}
Let $P$ be an idempotent in $\mathcal{A}$, since $P(I-P)=(I-P)P=0$, we have that
$m\delta(P)(I-P)+m\delta(I-P)P+nP\delta(I-P)+n(I-P)\delta(P)=0.$
By $\delta(I)=0$, we can easily show that
\begin{align}
(m+n)\delta(P)-2m\delta(P)P-2nP\delta(P)=0.                                \label{401}
\end{align}
Multiply $P$ from the both sides of \eqref{401} and by $m+n\neq0$, we have that
$P\delta(P)P=0$.
Then multiply $P$ from the left side of \eqref{401} and by $m\neq n$, we can obtain that
$P\delta(P)=0$.
Similarly, we can prove that
$\delta(P)P=0$.
By \eqref{401} and $m+n\neq0$, it is easy to show that
$\delta(P)=0$.

For each $A,B$ in $\mathcal{A}$,
define a bilinear mapping $\phi$ from $\mathcal{A}\times\mathcal{A}$ into $\mathcal M$ by
$$\phi(A,B)=m\delta(A)B+m\delta(B)A+nA\delta(B)+nB\delta(A).$$
It is clear that
$$AB=BA=0\Rightarrow\phi(A,B)=0,$$
by Lemma 3.1, we have that
$$\phi(A,P)+\phi(P,A)=\phi(AP,I)+\phi(I,PA)$$
for every $A$ in $\mathcal{A}$ and every idempotent $P$ in $\mathcal{A}$.
By the definition of $\phi$ and $\delta(I)=0$, it follows that
\begin{align}
(m+n)\delta(AP+PA)=2m\delta(A)P+2nP\delta(A).              \label{307}
\end{align}
Replace $A$ by $AP$ in \eqref{307}, we have that
\begin{align}
(m+n)\delta(AP)+(m+n)\delta(PAP)=2m\delta(AP)P+2nP\delta(AP).              \label{403}
\end{align}
Multiply $P$ from the both sides of \eqref{403} and by $m+n\neq0$, it implies that
$$P\delta(AP)P=P\delta(PAP)P.$$
Similarly, we have that
$$P\delta(PA)P=P\delta(PAP)P.$$
Multiply $P$ from the both sides of \eqref{307}, we can obtain that
\begin{align}
P\delta(A)P=P\delta(AP)P=P\delta(PA)P=P(PAP)P.              \label{306}
\end{align}

Next we prove that $P\delta(A)=P\delta(AP)$, $\delta(A)P=\delta(AP)P$ and $\delta(PA)=\delta(PAP)$.
Replace $A$ by $A-AP$ in \eqref{307}, we have that
\begin{align}
(m+n)\delta(PA-PAP)&=2m\delta(A-AP)P+2nP\delta(A-AP),                  \label{308}
\end{align}
multiply $P$ from the left side of \eqref{308} and by \eqref{306}, we can obtain that
\begin{align}
(m+n)P\delta(PA-PAP)=2nP\delta(A)-2nP\delta(AP),                     \label{309}
\end{align}
replace $A$ by $PA$ in \eqref{309}, we have that
\begin{align}
(m+n)P\delta(PA-PAP)=&2nP\delta(PA)-2nP\delta(PAP)\notag\\
=&2nP\delta(PA-PAP),                                                 \label{310}
\end{align}
by $m\neq n$ and \eqref{310}, it implies that
\begin{align}
P\delta(PA)=P\delta(PAP),                    \label{311}
\end{align}
by \eqref{309} and \eqref{311}, we can obtain that
\begin{align}
P\delta(A)=P\delta(AP).                  \label{312}
\end{align}

Multiply $P$ from the right side of \eqref{308} and by \eqref{306}, it follows that
\begin{align}
(m+n)\delta(PA-PAP)P=2m\delta(A)P-2m\delta(AP)P,                  \label{313}
\end{align}
replace $A$ by $PA$ in \eqref{313}, we have that
\begin{align}
(m+n)\delta(PA-PAP)P=&2m\delta(PA)P-2m\delta(PAP)P\notag\\
=&2m\delta(PA-PAP)P,                                                         \label{314}
\end{align}
by $n\neq m$ and \eqref{314}, we can obtain that
\begin{align}
\delta(PA)P=\delta(PAP)P,                    \label{315}
\end{align}
by \eqref{313} and \eqref{315}, we have that
\begin{align}
\delta(A)P=\delta(AP)P.                    \label{316}
\end{align}
By \eqref{308}, \eqref{312} and \eqref{316}, it follows that
\begin{align}
\delta(PA)=\delta(PAP).                    \label{317}
\end{align}
Similarly, it is easy to obtain three identities as follows:
\begin{align}
P\delta(A)=P\delta(PA),~~\delta(A)P=\delta(PA)P~\mathrm{and}~\delta(AP)=\delta(PAP).                \label{322}
\end{align}

Multiply $P$ from the left side of \eqref{307}, we have that
\begin{align}
(m+n)P\delta(PA+AP)=2mP\delta(A)P+2nP\delta(A),                   \label{328}
\end{align}
by \eqref{312}, \eqref{322} and \eqref{328}, we can obtain that
\begin{align}
P\delta(PA)=P\delta(AP)=P\delta(A)P.                   \label{329}
\end{align}
Multiply $P$ from the right side of \eqref{307}, we have that
\begin{align}
(m+n)\delta(PA+AP)P=2m\delta(A)P+2nP\delta(A)P,                   \label{330}
\end{align}
by \eqref{316}, \eqref{322} and \eqref{330}, it follows that
\begin{align}
\delta(PA)P=\delta(AP)P=P\delta(A)P.                   \label{331}
\end{align}
By \eqref{322}, \eqref{329} and \eqref{331}, we have that
\begin{align}
P\delta(A)=\delta(A)P.                  \label{332}
\end{align}
Finally, by \eqref{307}, \eqref{322} and \eqref{332}, it implies that
$\delta(PA)=\delta(AP)=\delta(A)P=P\delta(A).$
\end{proof}

By the definition of $\mathfrak{J}(\mathcal{A})$ and by Proposition 3.2, it is easy to show the following result.

\begin{corollary}
Let $\mathcal{A}$ be a unital algebra, $\mathcal{M}$ be a unital $\mathcal{A}$-bimodule
and $m,n$ be two positive integers with $m\neq n$.
If $\delta$ is an $(m,n)$-Jordan derivable mapping at zero from $\mathcal A$ into $\mathcal M$
such that $\delta(I)=0$,
then for every $S$ in $\mathfrak{J}(\mathcal{A})$ and every $A$ in $\mathcal{A}$, we have that
$$\delta(SA)=\delta(AS)=\delta(A)S=S\delta(A).$$
\end{corollary}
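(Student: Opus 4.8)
The plan is to show that the set of elements satisfying the asserted four-way identity is a subalgebra of $\mathcal{A}$ containing every idempotent; since $\mathfrak{J}(\mathcal{A})$ is by definition the subalgebra generated algebraically by all idempotents, this forces $\mathfrak{J}(\mathcal{A})$ to be contained in it, which is exactly the conclusion. Concretely, I would set
$$\mathcal{S}=\{S\in\mathcal{A}:\delta(SA)=\delta(AS)=\delta(A)S=S\delta(A)\ \text{for all}\ A\in\mathcal{A}\}.$$
By Proposition 3.2(2), every idempotent $P$ in $\mathcal{A}$ lies in $\mathcal{S}$. It therefore suffices to prove that $\mathcal{S}$ is closed under the algebra operations, after which the fact that $\mathfrak{J}(\mathcal{A})$ is the smallest subalgebra containing all idempotents gives $\mathfrak{J}(\mathcal{A})\subseteq\mathcal{S}$.

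Closure under addition is immediate from the additivity of $\delta$ together with the bi-additivity of the module actions, so the real content is closure under multiplication. For this I would take $S,T\in\mathcal{S}$ and verify the four equalities for $ST$. Applying the defining property of $S$ to the element $TA$ and then the defining property of $T$ to $A$ gives $\delta((ST)A)=\delta(S(TA))=S\delta(TA)=S(T\delta(A))=(ST)\delta(A)$; symmetrically, grouping $A(ST)=(AS)T$ and using first $T$ and then $S$ yields $\delta(A(ST))=\delta(AS)T=(\delta(A)S)T=\delta(A)(ST)$. Finally, the commutation relations $S\delta(A)=\delta(A)S$ and $T\delta(A)=\delta(A)T$, valid because $S,T\in\mathcal{S}$, give $(ST)\delta(A)=S(\delta(A)T)=(\delta(A)S)T=\delta(A)(ST)$, which ties all four quantities together and shows $ST\in\mathcal{S}$.

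I expect the main obstacle to be exactly this multiplicative step: it is not enough to match $\delta((ST)A)$ with $(ST)\delta(A)$ and $\delta(A(ST))$ with $\delta(A)(ST)$ separately; one must also check that $(ST)\delta(A)=\delta(A)(ST)$ so that all four expressions in the definition of $\mathcal{S}$ genuinely coincide, and this is precisely where the commutation half $P\delta(A)=\delta(A)P$ of Proposition 3.2, propagated through products, is used. A secondary point to watch is whether $\mathfrak{J}(\mathcal{A})$ is read as the subring or the complex subalgebra generated by the idempotents: additivity of $\delta$ handles finite sums of products of idempotents directly, while closure of $\mathcal{S}$ under complex scalars additionally requires homogeneity of $\delta$. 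In either reading, the argument reduces the corollary to the single-idempotent case already established in Proposition 3.2.
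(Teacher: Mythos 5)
Your proof is correct and takes essentially the same approach the paper intends: the paper's entire proof is the remark that the corollary follows ``by the definition of $\mathfrak{J}(\mathcal{A})$ and by Proposition 3.2,'' i.e., by propagating the four-way identity of Proposition 3.2(2) through sums and products of idempotents, which is exactly what your argument with the set $\mathcal{S}$ formalizes (and your multiplicative step, including the check $(ST)\delta(A)=\delta(A)(ST)$, is carried out correctly). Your closing remark on the subring-versus-subalgebra reading of $\mathfrak{J}(\mathcal{A})$ (additivity versus homogeneity of $\delta$) is a fine point the paper glosses over, but it does not affect the validity of your argument under either reading.
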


Recall the definition of separating set.
For an ideal $\mathcal{J}$ of an algebra $\mathcal{A}$, we say that $\mathcal{J}$
is a right separating set of $\mathcal A$-bimodule $\mathcal{M}$
if for every $M$ in $\mathcal{M}$,
$\mathcal{J}M=\{0\}$ implies $M=0$;
and we say that $\mathcal{J}$
is a left separating set of $\mathcal{M}$
if for every $N$ in $\mathcal{M}$,
$N\mathcal{J}=\{0\}$ implies $N=0$.

\begin{theorem}
Let $\mathcal{A}$ be a unital algebra, $\mathcal{M}$ be a unital $\mathcal{A}$-bimodule
with a right or a left separating set $\mathcal J\subseteq\mathfrak{J}(\mathcal{A})$
and $m,n$ be two positive integers with $m\neq n$.
If $\delta$ is an $(m,n)$-Jordan derivable mapping at zero from $\mathcal{A}$ into $\mathcal{M}$
such that $\delta(I)=0$, then $\delta$ is identically equal to zero.
\end{theorem}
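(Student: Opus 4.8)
The plan is to deduce this theorem almost entirely from Corollary 3.3, which already records that $\delta$ is compatible with left and right multiplication by elements of $\mathfrak{J}(\mathcal{A})$, and then to cash in the separating property of $\mathcal{J}$. So the first step is to observe that $\delta$ must vanish on the whole idempotent-generated subalgebra. Indeed, applying Corollary 3.3 with $A=I$ and using the standing hypothesis $\delta(I)=0$ gives, for every $S$ in $\mathfrak{J}(\mathcal{A})$,
$$\delta(S)=\delta(SI)=S\delta(I)=0.$$
Since $\mathcal{J}$ is by definition a (two-sided) ideal contained in $\mathfrak{J}(\mathcal{A})$, both $JA$ and $AJ$ lie in $\mathcal{J}\subseteq\mathfrak{J}(\mathcal{A})$ for every $A$ in $\mathcal{A}$ and every $J$ in $\mathcal{J}$, so the same identity yields $\delta(JA)=\delta(AJ)=0$.

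Next I would fix an arbitrary $A$ in $\mathcal{A}$ and combine this vanishing with the multiplicativity part of Corollary 3.3. Taking $S=J$ in that corollary gives $J\delta(A)=\delta(JA)$ and $\delta(A)J=\delta(AJ)$ for every $J$ in $\mathcal{J}$, and since the right-hand sides were just shown to be zero we obtain
$$J\delta(A)=0=\delta(A)J\quad\text{for every }J\in\mathcal{J},$$
that is, $\mathcal{J}\delta(A)=\{0\}$ and $\delta(A)\mathcal{J}=\{0\}$ simultaneously.

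Finally I would invoke the separating hypothesis to conclude. If $\mathcal{J}$ is a right separating set of $\mathcal{M}$, then $\mathcal{J}\delta(A)=\{0\}$ forces $\delta(A)=0$; if instead $\mathcal{J}$ is a left separating set, then $\delta(A)\mathcal{J}=\{0\}$ forces $\delta(A)=0$. Either way $\delta(A)=0$ for all $A$, so $\delta\equiv0$. I do not expect any serious obstacle here: the substantive work has already been carried out in Proposition 3.2 and Corollary 3.3, and this statement is a short corollary. The only points demanding any care are the bookkeeping that $JA,AJ\in\mathcal{J}$ (so that $\delta$ annihilates them), and the matching between the \emph{type} of separating set and which of the two one-sided annihilation conditions $\mathcal{J}\delta(A)=\{0\}$ or $\delta(A)\mathcal{J}=\{0\}$ one feeds into it.
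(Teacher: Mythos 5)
Your proof is correct and follows essentially the same route as the paper's: both arguments rest on Corollary 3.3, the fact that $\mathcal{J}$ is an ideal contained in $\mathfrak{J}(\mathcal{A})$ (so that $JA,\,AJ\in\mathfrak{J}(\mathcal{A})$), and the separating hypothesis. The only cosmetic difference is that the paper computes $\delta(SAB)$ in two ways to first obtain $\delta(AB)=A\delta(B)$ and then sets $B=I$, whereas you set $B=I$ from the outset by observing that $\delta$ vanishes on $\mathfrak{J}(\mathcal{A})$; the underlying mechanism is identical.
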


\begin{proof}
Let $A, B$ be in $\mathcal{A}$ and every $S$ be in $\mathcal{J}$. By Corollary 3.3, we have that
$$\delta(SAB)=S\delta(AB)$$
and
$$\delta(SAB)=\delta((SA)B)=SA\delta(B).$$
It follows that $S(\delta(AB)-A\delta(B))=0$.
If $\mathcal{J}$ is a right separating set of $\mathcal{M}$, we can obtain that
$\delta(AB)=A\delta(B)$. Take $B=I$ and by $\delta(I)=0$, we have that $\delta(A)=A\delta(I)=0$.

Similarly, if $\mathcal{J}$ is a left separating set of $\mathcal{M}$, then
we also can show that $\delta(A)=\delta(I)A=0$.
\end{proof}

By Theorem 3.4, it is easy to show the following result.

\begin{corollary}
Let $\mathcal{A}$ be a unital algebra with $\mathcal{A}=\mathfrak{J}(\mathcal{A})$,
$\mathcal{M}$ be a unital $\mathcal{A}$-bimodule and $m,n$ be two positive integers with $m\neq n$.
If $\delta$ is an $(m,n)$-Jordan derivable mapping at zero from $\mathcal A$ into $\mathcal M$
such that $\delta(I)=0$, then $\delta$ is identically equal to zero.
\end{corollary}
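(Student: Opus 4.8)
The plan is to deduce this directly from Theorem 3.4 by exhibiting a suitable separating set inside $\mathfrak{J}(\mathcal{A})$. The hypothesis $\mathcal{A}=\mathfrak{J}(\mathcal{A})$ makes the natural candidate $\mathcal{J}=\mathcal{A}$, which is trivially an ideal of $\mathcal{A}$ and is contained in $\mathfrak{J}(\mathcal{A})$. So the only real content is to confirm that this $\mathcal{J}$ is a separating set of $\mathcal{M}$, after which the theorem applies verbatim.

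First I would verify that $\mathcal{J}=\mathcal{A}$ is a separating set of $\mathcal{M}$. Since $\mathcal{M}$ is a unital $\mathcal{A}$-bimodule, the unit $I$ acts as the identity on $\mathcal{M}$. Hence if $M\in\mathcal{M}$ satisfies $\mathcal{A}M=\{0\}$, then in particular $M=IM=0$, so $\mathcal{A}$ is a right separating set; and if $N\in\mathcal{M}$ satisfies $N\mathcal{A}=\{0\}$, then $N=NI=0$, so $\mathcal{A}$ is also a left separating set. In fact either conclusion alone already suffices for the hypotheses of Theorem 3.4.

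With this choice $\mathcal{J}=\mathcal{A}=\mathfrak{J}(\mathcal{A})$, all the hypotheses of Theorem 3.4 are met: $\mathcal{A}$ is a unital algebra, $\mathcal{M}$ is a unital $\mathcal{A}$-bimodule equipped with a (left or right) separating set $\mathcal{J}\subseteq\mathfrak{J}(\mathcal{A})$, the integers $m,n$ are positive with $m\neq n$, and $\delta$ is an $(m,n)$-Jordan derivable mapping at zero with $\delta(I)=0$. Theorem 3.4 then immediately yields that $\delta$ is identically equal to zero.

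Since every step reduces to checking hypotheses, there is no genuine obstacle here; the only point requiring attention is the observation that the whole algebra $\mathcal{A}$ already qualifies as a separating set, precisely because $\mathcal{M}$ is unital, and that the condition $\mathcal{A}=\mathfrak{J}(\mathcal{A})$ is exactly what places this separating set inside $\mathfrak{J}(\mathcal{A})$ as Theorem 3.4 requires.
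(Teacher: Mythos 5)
Your proof is correct and matches the paper's intended argument: the paper states Corollary 3.5 as an immediate consequence of Theorem 3.4, and your instantiation $\mathcal{J}=\mathcal{A}=\mathfrak{J}(\mathcal{A})$, together with the observation that unitality of $\mathcal{M}$ makes $\mathcal{A}$ itself a (left and right) separating set, is exactly the reduction the authors have in mind.
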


Let $X$ be a complex Banach space and $B(X)$ be the set of all bounded linear operators on $X$.
In this paper, every subspace of $X$ is a closed linear manifold. By a \emph{subspace lattice} on $X$,
we mean a collection $\mathcal{L}$ of subspaces of $X$ with $(0)$ and $X$ in $\mathcal{L}$
such that, for every family $\{M_{r}\}$ of elements of $\mathcal{L}$, both $\cap M_{r}$
and $\vee M_{r}$ belong to $\mathcal{L}$, where $\vee M_{r}$ denotes the closed linear
span of $\{M_{r}\}$.

For every subspace lattice $\mathcal{L}$ on $X$, we use $\mathrm{Alg}\mathcal{L}$
to denote the algebra of all operators in $B(X)$ that leave members of $\mathcal{L}$ invariant.

For a subspace lattice $\mathcal{L}$ on $X$ and for every $E$ in $\mathcal{L}$, we denote by
$$E_-=\vee\{F\in \mathcal{L}:F\nsupseteq E\},~(0)_-=(0);$$
and
$$E_+=\cap\{F\in \mathcal{L}:F\nsubseteq E\},~X_+=X.$$

A totally ordered subspace lattice $\mathcal{N}$ is called a \emph{nest},
$\mathcal{N}$ is called a \emph{discrete nest} if $L_-\neq L$
for every nontrivial subspace $L$ in $\mathcal{N}$,
and $\mathcal{N}$ is called a \emph{continuous nest} if $L_-=L$
for every subspace $L$ in $\mathcal{N}$.

By \cite{D. Hadwin1} and Theorem 3.4, we have the following two corollaries.

\begin{corollary}
Let $\mathcal{L}$ be a subspace lattice in a von Neumann algebra $\mathcal{B}$
on a Hilbert space $\mathcal{H}$ such that
$\mathcal{H}_-\neq \mathcal{H}$ or $(0)_+\neq(0)$,
and $m,n$ be two positive integers with $m\neq n$.
If $\delta$ is an $(m,n)$-Jordan derivable mapping at zero from $\mathcal{B}\cap\mathrm{Alg}\mathcal{L}$
into $\mathcal B$ such that $\delta(I)=0$, then $\delta$ is identically equal to zero.
\end{corollary}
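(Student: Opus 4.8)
The plan is to read this corollary as a specialization of Theorem 3.4 to the concrete algebra and bimodule coming from the subspace lattice, so that almost all of the content is packaged into that theorem together with the cited work of Hadwin, leaving only the verification of the separating-set hypothesis to be supplied here.

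First I would set $\mathcal{A} = \mathcal{B} \cap \mathrm{Alg}\mathcal{L}$ and $\mathcal{M} = \mathcal{B}$. The identity operator $I$ lies in the von Neumann algebra $\mathcal{B}$ and leaves every member of $\mathcal{L}$ invariant, so $I \in \mathcal{A}$ and $\mathcal{A}$ is a unital algebra; moreover $\mathcal{B}$ is a unital $\mathcal{A}$-bimodule under the operator multiplication inherited from $B(\mathcal{H})$, since $\mathcal{A} \subseteq \mathcal{B}$ and $\mathcal{B}$ is an algebra containing $I$. With these identifications the hypotheses on $\delta$ transfer verbatim: $\delta$ is an $(m,n)$-Jordan derivable mapping at zero from the unital algebra $\mathcal{A}$ into the unital $\mathcal{A}$-bimodule $\mathcal{M}$ with $\delta(I) = 0$, and $m, n$ are positive integers with $m \neq n$. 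Thus every hypothesis of Theorem 3.4 is in place except possibly the existence of a one-sided separating set $\mathcal{J} \subseteq \mathfrak{J}(\mathcal{A})$.

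The decisive step is therefore to produce such an ideal $\mathcal{J}$, and this is exactly where I would invoke \cite{D. Hadwin1}. The geometric conditions $\mathcal{H}_- \neq \mathcal{H}$ and $(0)_+ \neq (0)$ are precisely what forces $\mathrm{Alg}\mathcal{L}$ to carry a nonzero projection at the top or the bottom of the lattice: when $\mathcal{H}_- \neq \mathcal{H}$ the space $\mathcal{H} \ominus \mathcal{H}_-$ is nonzero and the projection onto it lies in $\mathcal{A}$, and dually when $(0)_+ \neq (0)$. Such an idempotent generates an ideal of $\mathcal{A}$ contained in $\mathfrak{J}(\mathcal{A})$, and because $\mathcal{B}$ acts faithfully and nondegenerately on $\mathcal{H}$, this ideal is a right (respectively left) separating set of the bimodule $\mathcal{M} = \mathcal{B}$. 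The main obstacle of the argument is exactly this translation: confirming that the lattice-theoretic hypothesis yields the separating-set property with the separating ideal landing inside $\mathfrak{J}(\mathcal{A})$, which is the content I expect to draw from \cite{D. Hadwin1}. Once the separating set is in hand, Theorem 3.4 applies without change and gives $\delta = 0$; the remaining checks—that $\mathcal{A}$ is unital, that $\mathcal{M}$ is a unital bimodule, and that the derivable-mapping identity is inherited—are routine and require no further torsion or continuity assumptions.
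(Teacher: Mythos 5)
Your proposal takes exactly the paper's route: the paper's entire proof of this corollary is the one sentence preceding it (``By \cite{D. Hadwin1} and Theorem 3.4, we have the following two corollaries''), i.e.\ Theorem 3.4 applied with $\mathcal{A}=\mathcal{B}\cap\mathrm{Alg}\mathcal{L}$ and $\mathcal{M}=\mathcal{B}$, with the separating ideal inside $\mathfrak{J}(\mathcal{A})$ imported from Hadwin--Li. Your identification of the algebra, the bimodule, and of the separating-set hypothesis as the only nontrivial thing to check is precisely the paper's reduction, so as a skeleton the proposal matches.

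However, the mechanism you sketch for producing that separating ideal would not survive scrutiny, and it is worth recording why. Your claim that the projection $Q$ onto $\mathcal{H}\ominus\mathcal{H}_-$ lies in $\mathcal{A}$ is fine, but the two claims you hang on it both fail in general. First, separation: faithfulness of the action of $\mathcal{B}$ on $\mathcal{H}$ does not make the ideal generated by $Q$ separating; the obstruction is that the central carrier of $Q$ in $\mathcal{B}$ can be proper. Take $\mathcal{B}=B(H_1)\oplus B(H_2)$ acting on $\mathcal{H}=H_1\oplus H_2$ and $\mathcal{L}=\{(0),\,H_1\oplus 0,\,\mathcal{H}\}$; then $\mathcal{H}_-=H_1\oplus 0\neq\mathcal{H}$, $\mathcal{A}=\mathcal{B}$, $Q=0\oplus I$, and the ideal generated by $Q$ is $0\oplus B(H_2)$, which is annihilated on both sides by $M=I\oplus 0\neq 0$. (Theorem 3.4 does apply to this example, but with $\mathcal{J}$ the ideal generated by \emph{all} idempotents, which here is the whole algebra.) Second, containment in $\mathfrak{J}(\mathcal{A})$: the ideal generated by $Q$ contains the corner $Q\mathcal{B}Q$, a von Neumann algebra that need not lie in the span of idempotents --- replace $B(H_2)$ above by the diffuse algebra $L^{\infty}[0,1]$, whose span of idempotents consists only of simple functions. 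In that second example one can check that \emph{no} separating ideal contained in $\mathfrak{J}(\mathcal{A})$ exists at all, so the passage from the lattice hypothesis to the hypotheses of Theorem 3.4 is genuinely more delicate than either your sketch or the paper's one-line proof suggests: everything rests on the exact statement and hypotheses of the result in \cite{D. Hadwin1}. Your instinct to defer that step to the citation (as the paper does) is the right reading of the argument, but the heuristic you offer in its place should not be mistaken for a proof of it.
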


\begin{corollary}
Let $\mathcal{N}$ be a nest in a von Neumann algebra $\mathcal{B}$
and $m,n$ be two positive integers with $m\neq n$.
If $\delta$ is an $(m,n)$-Jordan derivable mapping at zero from $\mathcal{B}\cap\mathrm{Alg}\mathcal{N}$
into $\mathcal B$ such that $\delta(I)=0$, then $\delta$ is identically equal to zero.
\end{corollary}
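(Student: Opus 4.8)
The plan is to obtain Corollary 3.8 as a direct application of Theorem 3.4, just as Corollary 3.7 is. Set $\mathcal{A}=\mathcal{B}\cap\mathrm{Alg}\mathcal{N}$ and regard $\mathcal{B}$ as a bimodule over $\mathcal{A}$ through its own multiplication. First I would dispatch the routine hypotheses: $\mathcal{A}$ is unital because $I=P_{\mathcal{H}}\in\mathcal{A}$, $\mathcal{B}$ is a unital $\mathcal{A}$-bimodule, $m,n$ are positive with $m\neq n$, and $\delta$ is an $(m,n)$-Jordan derivable mapping at zero with $\delta(I)=0$. The whole problem then reduces to one structural point: producing an ideal $\mathcal{J}\subseteq\mathfrak{J}(\mathcal{A})$ that is a right or a left separating set of $\mathcal{B}$. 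Once such a $\mathcal{J}$ is available, Theorem 3.4 yields $\delta\equiv0$ immediately.

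To construct $\mathcal{J}$ I would exploit the rank-one operators sitting inside the algebra. For $N\in\mathcal{N}$ and vectors $\xi\in N$, $\eta\in N_-^{\perp}$ the operator $\xi\otimes\eta^{*}$ belongs to $\mathrm{Alg}\mathcal{N}$, and such a rank-one always lies in $\mathfrak{J}(\mathcal{A})$: when $\eta\in N^{\perp}$ one has the decomposition $\xi\otimes\eta^{*}=(P_N+\xi\otimes\eta^{*})-P_N$ into idempotents of the algebra, and when a gap is present the rank-one may be normalized to be idempotent outright. Since a product $A(\xi\otimes\eta^{*})B$ is again a rank-one of the same form, the ideal generated by these operators is still contained in $\mathfrak{J}(\mathcal{A})$, and letting $\eta$ range over $\bigcup_{N}N_-^{\perp}$ makes this ideal separate $\mathcal{B}$ on one side. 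This is precisely the mechanism behind \cite{D. Hadwin1}; in the special case of a gap at the top or the bottom, that is $\mathcal{H}_-\neq\mathcal{H}$ or $(0)_+\neq(0)$, the separating rank-ones may be chosen idempotent and the statement is literally Corollary 3.7.

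The main obstacle, as I see it, is that $\mathcal{B}$ is an arbitrary von Neumann algebra rather than $B(\mathcal{H})$, so the rank-one operators above need not lie in $\mathcal{B}$, and for a continuous nest there are no rank-one idempotents to rely on at all. Overcoming this is exactly the role of \cite{D. Hadwin1}: for an arbitrary nest $\mathcal{N}$ in $\mathcal{B}$ it furnishes enough idempotents of $\mathcal{A}=\mathcal{B}\cap\mathrm{Alg}\mathcal{N}$ to generate an ideal $\mathcal{J}\subseteq\mathfrak{J}(\mathcal{A})$ separating $\mathcal{B}$, using the faithfulness of the module action together with the fact that the nest projections $\{P_N\}$ lie in $\mathcal{B}$. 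With the separating ideal secured in every case, an appeal to Theorem 3.4 forces $\delta$ to vanish identically, which completes the proof.
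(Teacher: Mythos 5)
Your proposal is correct and follows essentially the same route as the paper, which proves this corollary purely by citing \cite{D. Hadwin1} for the existence of a separating ideal $\mathcal{J}\subseteq\mathfrak{J}(\mathcal{B}\cap\mathrm{Alg}\mathcal{N})$ and then invoking Theorem 3.4. Your additional discussion of rank-one idempotents only fleshes out the mechanism that the paper delegates wholesale to \cite{D. Hadwin1}, and you correctly identify that the general von Neumann algebra and continuous nest cases are exactly what that reference is needed for.
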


Let $\mathcal L$ be a subspace lattice on $X$. Denote by
$\mathcal{J}_\mathcal{L}=\{L\in \mathcal{L}:L\neq (0),~L_-\neq X\}$
and $\mathcal{P}_\mathcal{L}=\{L\in \mathcal{L}:L_-\nsupseteq L\}.$
$\mathcal{L}$ is called a \emph{$\mathcal{J}$-subspace lattice} on $X$
if it satisfies $E\vee E_-=X$ and $E\cap E_-=(0)$ for every $E$ in
$\mathcal{J}_\mathcal{L}$; $\vee\{E:E\in \mathcal{J}_\mathcal{L}\}=X$ and $\cap\{E_-:E\in \mathcal{J}_\mathcal{L}\}=(0)$.
$\mathcal{L}$ is called a \emph{$\mathcal{P}$-subspace lattice} on $X$ if
it satisfies $\vee\{E:E\in\mathcal{P}_\mathcal{L}\}=X$ or $\cap\{E_-:E\in\mathcal{P}_\mathcal{L}\}=(0)$.

The class of $\mathcal{P}$-subspace lattice algebras is very large, it
includes the following: \\
(1) $\mathcal{J}$-subspace lattice algebras;\\
(2) discrete nest algebras;\\
(3) reflexive algebras $\mathrm{Alg}\mathcal{L}$ such that $(0)_{+}\neq(0)$ or $X_{-}\neq X$.

In Y. Chen and J. Li \cite{Y. Chen}, if $\mathcal{L}$ satisfies $\vee\{L:L\in\mathcal{P}_\mathcal{L}\}=X$
or $\cap\{L_-:L\in\mathcal{P}_\mathcal{L}\}=(0)$,
then the ideal~$\mathcal{T}=\mathrm{\mathrm{span}}\{x\otimes f:x\in E,f\in E_{-}^{\perp},E\in\mathcal{P}_\mathcal{L}\}$
in $\mathrm{Alg}\mathcal{L}$ is generated by the idempotents in $\mathrm{Alg}\mathcal{L}$ and $\mathcal{T}$
is right separating set of $B(X)$. It follows the following result.

\begin{corollary}
Let $\mathcal{L}$ be a $\mathcal{P}$-subspace lattice on a Banach space $X$
and $m,n$ be two positive integers with $m\neq n$.
If $\delta$ is an $(m,n)$-Jordan derivable mapping at zero from $\mathrm{Alg}\mathcal{L}$
into $B(X)$ such that $\delta(I)=0$, then $\delta$ is identically equal to zero.
\end{corollary}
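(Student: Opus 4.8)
The plan is to recognize this corollary as a direct specialization of Theorem 3.4, with the algebra taken to be $\mathcal{A}=\mathrm{Alg}\mathcal{L}$ and the bimodule taken to be $\mathcal{M}=B(X)$. Accordingly, the entire proof reduces to checking that the structural hypotheses of Theorem 3.4 are met: that $B(X)$ is a unital $\mathrm{Alg}\mathcal{L}$-bimodule, and that $\mathrm{Alg}\mathcal{L}$ carries a one-sided separating set lying inside the subalgebra $\mathfrak{J}(\mathrm{Alg}\mathcal{L})$ generated algebraically by its idempotents.

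First I would observe that $\mathrm{Alg}\mathcal{L}$ is a unital subalgebra of $B(X)$, since the identity operator leaves every subspace invariant and hence $I\in\mathrm{Alg}\mathcal{L}$. Consequently $B(X)$, equipped with the left and right multiplications inherited from $B(X)$, is a unital $\mathrm{Alg}\mathcal{L}$-bimodule, so the ambient framework of Theorem 3.4 applies verbatim.

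The crux is to supply the separating set. Here I would invoke the result of Y.~Chen and J.~Li \cite{Y. Chen} quoted just above: under the defining condition of a $\mathcal{P}$-subspace lattice, namely $\vee\{E:E\in\mathcal{P}_\mathcal{L}\}=X$ or $\cap\{E_-:E\in\mathcal{P}_\mathcal{L}\}=(0)$, the rank-one ideal $\mathcal{T}=\mathrm{span}\{x\otimes f:x\in E,\ f\in E_-^{\perp},\ E\in\mathcal{P}_\mathcal{L}\}$ is (i) generated algebraically by idempotents of $\mathrm{Alg}\mathcal{L}$, so that $\mathcal{T}\subseteq\mathfrak{J}(\mathrm{Alg}\mathcal{L})$, and (ii) a right separating set of $B(X)$. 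The geometry behind (i) is that each $x\otimes f$ with $x\in E$ and $f\in E_-^{\perp}$ indeed lies in $\mathrm{Alg}\mathcal{L}$ (it annihilates any member of $\mathcal{L}$ not containing $E$, since such a member is contained in $E_-$, on which $f$ vanishes, and it carries any member containing $E$ into $\mathbb{C}x\subseteq E$), and such rank-one operators can be rescaled and combined into idempotents; the separating property (ii) is exactly where the covering/intersection condition is used.

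With these verifications in hand, the conclusion is immediate: taking $\mathcal{J}=\mathcal{T}$, Theorem 3.4 applies to the $(m,n)$-Jordan derivable mapping $\delta$ at zero satisfying $\delta(I)=0$, and yields $\delta=0$. The only genuinely substantive input is the Chen--Li description of $\mathcal{T}$; were it not available, I would have to reprove that the rank-one ideal is both idempotent-generated and separating, the separating half being the main obstacle, since it rests on using $\vee E=X$ (or $\cap E_-=(0)$) to manufacture enough rank-one multipliers to annihilate only the zero operator.
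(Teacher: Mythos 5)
Your proposal matches the paper's own argument exactly: the paper likewise cites the Chen--Li result that the rank-one ideal $\mathcal{T}=\mathrm{span}\{x\otimes f : x\in E,\ f\in E_-^{\perp},\ E\in\mathcal{P}_\mathcal{L}\}$ is generated by idempotents of $\mathrm{Alg}\mathcal{L}$ and is a right separating set of $B(X)$, and then applies Theorem 3.4 with $\mathcal{J}=\mathcal{T}$. Your added verification that each $x\otimes f$ lies in $\mathrm{Alg}\mathcal{L}$ is correct and only makes the reduction more explicit than the paper's one-line deduction.
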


Let $\mathcal L$ be a subspace lattice on $X$.
$\mathcal{L}$ is said to be \emph{completely distributive} if its subspaces satisfy the identity
$$\bigwedge_{a\in I}\bigvee_{b\in J}L_{a,b}=\bigvee_{f\in J^{I}}\bigwedge_{a\in I}L_{a,f(a)},$$
where $J^{I}$ denotes the set of all $f:I\rightarrow J$.

Suppose that $\mathcal{L}$ is a completely distributive subspace lattice on $X$
and $\mathcal A=\mathrm{Alg}\mathcal L$.
By D. Hadwin \cite{D. Hadwin}, we know that
$\mathcal{T}=\mathrm{span}\{T:T\in\mathcal{A},\mathrm{rank}~T=1\}$
is an ideal of $\mathcal A$  and by C. Laurie \cite{laurie},
we have that $\mathcal{T}$ is a separating set of $\mathcal{M}$.

\begin{corollary}
Let $\mathcal{L}$ be a completely distributive subspace lattice
on a Hilbert space $\mathcal{H}$,
$\mathcal{M}$ be a dual normal Banach $\mathrm{Alg}\mathcal{L}$-bimodule
and $m,n$ be two positive integers with $m\neq n$.
If $\delta$ is an $(m,n)$-Jordan derivable mapping at zero from $\mathrm{Alg}\mathcal{L}$
into $\mathcal M$ such that $\delta(I)=0$, then $\delta$ is identically equal to zero.
\end{corollary}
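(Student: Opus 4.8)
The plan is to invoke Theorem 3.4 with $\mathcal{A}=\mathrm{Alg}\mathcal{L}$, with $\mathcal{M}$ the given dual normal Banach bimodule, and with the distinguished separating set taken to be $\mathcal{T}=\mathrm{span}\{T\in\mathcal{A}:\mathrm{rank}\,T=1\}$. Since $I\in\mathrm{Alg}\mathcal{L}$ the algebra $\mathcal{A}$ is unital, and a dual normal Banach $\mathrm{Alg}\mathcal{L}$-bimodule is a unital bimodule, so the standing hypotheses of Theorem 3.4 on $\mathcal{A}$ and $\mathcal{M}$ are met. By the Hadwin and Laurie results recalled just before the statement, $\mathcal{T}$ is an ideal of $\mathcal{A}$ and is a (two-sided) separating set of $\mathcal{M}$. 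Hence the entire argument reduces to checking the one remaining condition of Theorem 3.4, namely the inclusion $\mathcal{T}\subseteq\mathfrak{J}(\mathcal{A})$, after which the conclusion is immediate.

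First I would reduce this inclusion to its rank-one generators: it suffices to prove that every rank-one operator $R=x\otimes f\in\mathrm{Alg}\mathcal{L}$ lies in the subalgebra $\mathfrak{J}(\mathcal{A})$ generated by the idempotents of $\mathcal{A}$. By Longstaff's characterization of rank-one operators in a reflexive algebra, membership $x\otimes f\in\mathrm{Alg}\mathcal{L}$ forces the existence of some $E\in\mathcal{L}$ with $x\in E$ and $f\in E_-^{\perp}$. If $f(x)\neq0$, then after rescaling $R$ one has $R^2=f(x)R$, so a scalar multiple of $R$ is a rank-one idempotent in $\mathcal{A}$ and $R\in\mathfrak{J}(\mathcal{A})$ at once.

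The crux is the nilpotent case $f(x)=0$, where $R$ is not itself idempotent. Here I would express $R$ as a product of two rank-one idempotents of $\mathrm{Alg}\mathcal{L}$: seeking $g\in E_-^{\perp}$ with $g(x)=1$ and $w\in E$ with $f(w)=1$ and $g(w)\neq0$, the operators $P=x\otimes g$ and $Q=w\otimes f$ are rank-one idempotents in $\mathrm{Alg}\mathcal{L}$ satisfying $PQ=g(w)\,(x\otimes f)=g(w)\,R$, whence $R=g(w)^{-1}PQ\in\mathfrak{J}(\mathcal{A})$. I expect the existence of $g$ and $w$ with the required non-degeneracy, equivalently the factorization of $R$ through $\mathrm{Alg}\mathcal{L}$-idempotents, to be the main obstacle, since one must confirm that these auxiliary rank-one operators again belong to $\mathrm{Alg}\mathcal{L}$ and that the vectors and functionals can be chosen non-degenerately. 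This is precisely where the complete distributivity of $\mathcal{L}$ enters: through the Laurie rank-one theorem it guarantees that $\mathrm{Alg}\mathcal{L}$ carries enough rank-one operators to perform such factorizations, so that $\mathcal{T}$ is genuinely generated by idempotents.

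With $\mathcal{J}=\mathcal{T}\subseteq\mathfrak{J}(\mathcal{A})$ established as a separating set of $\mathcal{M}$, and using the hypotheses $\delta(I)=0$ and $m\neq n$, Theorem 3.4 applies directly and yields $\delta\equiv0$, completing the proof.
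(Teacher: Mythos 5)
Your overall skeleton is the same as the paper's: the paper disposes of this corollary by combining Theorem 3.4 with the remark immediately preceding it, namely that $\mathcal{T}=\mathrm{span}\{T\in\mathrm{Alg}\mathcal{L}:\mathrm{rank}\,T=1\}$ is an ideal (Hadwin--Li) and a separating set of $\mathcal{M}$ (Laurie--Longstaff), with the remaining inclusion $\mathcal{T}\subseteq\mathfrak{J}(\mathrm{Alg}\mathcal{L})$ left to the cited literature. You correctly isolate that inclusion as the one thing to check, but your proposed proof of it has a genuine gap, and it is exactly the step you yourself flag as ``the main obstacle.'' In the nilpotent case $f(x)=0$ you need $g\in E_-^{\perp}$ with $g(x)=1$ (equivalently $x\notin E_-$) and $w\in E$ with $f(w)=1$ (equivalently $f|_E\neq0$). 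Complete distributivity supplies neither. Concretely, let $\mathcal{L}$ be a continuous nest, e.g.\ the Volterra nest $\{L^2[0,t]:t\in[0,1]\}$; every complete chain is completely distributive, so this is a legitimate instance of the corollary. There $E_-=E$ for every $E$, so Longstaff's criterion forces every rank-one $x\otimes f\in\mathrm{Alg}\mathcal{L}$ to satisfy $x\in E$, $f\in E^{\perp}$, hence $f(x)=0$ for \emph{every} admissible $E$; in particular $\mathrm{Alg}\mathcal{L}$ contains no rank-one idempotents whatsoever, and neither $g$ nor $w$ exists. So the factorization $R=g(w)^{-1}PQ$ is impossible in this case, and the appeal to the Laurie rank-one theorem cannot rescue it: that theorem gives weak-$*$ density of $\mathcal{T}$ (which is what yields the separating property), not any supply of rank-one idempotents or factorizations through them.

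The inclusion $\mathcal{T}\subseteq\mathfrak{J}(\mathrm{Alg}\mathcal{L})$ is nevertheless true, but the mechanism must use idempotents of higher rank: if $R=x\otimes f$ is nilpotent and $P$ is \emph{any} idempotent of the algebra with $PR=R$ and $RP=0$, then $(P+R)^2=P+R$, so $R=(P+R)-P$ is a difference of two idempotents of $\mathrm{Alg}\mathcal{L}$ and hence lies in $\mathfrak{J}(\mathrm{Alg}\mathcal{L})$. For a nest (or any commutative subspace lattice) one can take $P$ to be the orthogonal projection onto $E_-$ when $x\in E_-$, and your rank-one idempotent $x\otimes g$ when $x\notin E_-$; this is essentially how the results the paper cites ([D. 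Hadwin], [Y. Chen]) proceed. So: right reduction to Theorem 3.4, right identification of the crux, but the device you propose for the crux provably fails for continuous nests, and it must be replaced by the difference-of-idempotents argument (or by an explicit citation of that fact).
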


\begin{corollary}
Let $\mathcal{A}$ be a unital subalgebra of $B(X)$ such that $\mathcal{A}$ contains $\{x_{0}\otimes f:f\in X^{*}\}$,
where $0\neq x_{0}\in X$, and $m,n$ be two positive integers with $m\neq n$.
If $\delta$ is an $(m,n)$-Jordan derivable mapping at zero from $\mathcal{A}$
into $B(X)$ such that $\delta(I)=0$, then $\delta$ is identically equal to zero.
\end{corollary}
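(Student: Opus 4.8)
The plan is to produce, inside $\mathcal{A}$, a concrete ideal contained in $\mathfrak{J}(\mathcal{A})$ that separates $B(X)$ on the appropriate side, and then to invoke Theorem 3.4. Writing $x\otimes f$ for the rank-one operator $y\mapsto f(y)x$, I would set
$$\mathcal{J}=\{x_{0}\otimes f:f\in X^{*}\},$$
the rank-one operators whose range lies in $\mathbb{C}x_{0}$; by hypothesis $\mathcal{J}\subseteq\mathcal{A}$. First I would record the one-sided ideal structure: for $T\in\mathcal{A}$ one has $(x_{0}\otimes f)T=x_{0}\otimes(f\circ T)\in\mathcal{J}$, so $\mathcal{J}$ is a right ideal, while $T(x_{0}\otimes f)=(Tx_{0})\otimes f$ need not return to $\mathcal{J}$. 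Thus $\mathcal{J}$ is a right ideal, which is precisely the side matching the right-separating branch of Theorem 3.4 (whose proof only needs $SA\in\mathcal{J}\subseteq\mathfrak{J}(\mathcal{A})$ for $S\in\mathcal{J}$, $A\in\mathcal{A}$).

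Next I would verify the separating property. If $M\in B(X)$ satisfies $\mathcal{J}M=\{0\}$, then $0=(x_{0}\otimes f)M=x_{0}\otimes(f\circ M)$ for every $f\in X^{*}$; since $x_{0}\neq0$ this forces $f\circ M=0$ for all $f$, and because $X^{*}$ separates the points of $X$ we conclude $M=0$. Hence $\mathcal{J}$ is a right separating set of the bimodule $B(X)$.

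The remaining, and genuinely delicate, point is the containment $\mathcal{J}\subseteq\mathfrak{J}(\mathcal{A})$. Since $x_{0}\neq0$, Hahn--Banach provides $\phi\in X^{*}$ with $\phi(x_{0})=1$, so $P=x_{0}\otimes\phi$ is an idempotent in $\mathcal{A}$. For an arbitrary $f\in X^{*}$ I would set $\alpha=f(x_{0})$ and $h=f-\alpha\phi$, so that $h(x_{0})=0$ and $x_{0}\otimes f=\alpha P+x_{0}\otimes h$. The obstacle is that $x_{0}\otimes h$ is nilpotent rather than idempotent, so it cannot be written as a single idempotent; instead it must be realized as a difference of idempotents. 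Here $(\phi+h)(x_{0})=1$, so $Q=x_{0}\otimes(\phi+h)$ is again an idempotent in $\mathcal{A}$ and $x_{0}\otimes h=Q-P$. Therefore $x_{0}\otimes f=(\alpha-1)P+Q\in\mathfrak{J}(\mathcal{A})$, giving $\mathcal{J}\subseteq\mathfrak{J}(\mathcal{A})$.

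With $\mathcal{J}\subseteq\mathfrak{J}(\mathcal{A})$ a right ideal and a right separating set of $B(X)$, Theorem 3.4 applies and yields $\delta\equiv0$. I expect the idempotent decomposition $x_{0}\otimes h=Q-P$ to be the crux: the rank-one operators annihilating $x_{0}$ are nilpotent and must be handled through this difference trick, whereas the right-ideal property, the separating property, and the final appeal to Theorem 3.4 are routine. If one prefers to match the two-sided formulation of a separating set literally, the same argument applied to each line $\mathbb{C}(Tx_{0})$ (note $T(x_{0}\otimes\psi)=(Tx_{0})\otimes\psi\in\mathcal{A}$ supplies idempotents onto $\mathbb{C}(Tx_{0})$) shows that the two-sided ideal generated by $\mathcal{J}$ also lies in $\mathfrak{J}(\mathcal{A})$ and is right separating, so the conclusion is unaffected.
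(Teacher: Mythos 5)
Your proposal is correct and follows exactly the route the paper intends for this corollary (which it states without proof, as a consequence of Theorem 3.4): show that $\mathcal{J}=\{x_{0}\otimes f:f\in X^{*}\}$ lies in $\mathfrak{J}(\mathcal{A})$ via the standard idempotent-difference trick $x_{0}\otimes h=x_{0}\otimes(\phi+h)-x_{0}\otimes\phi$ for $h(x_{0})=0$, verify it is a right separating set of $B(X)$, and invoke Theorem 3.4. Your extra care about $\mathcal{J}$ being only a right ideal (and the alternative of passing to the two-sided ideal it generates) is a point the paper glosses over, and your handling of it is sound.
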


Similar to the corollary 3.10, we have the following result.

\begin{corollary}
Let $\mathcal{A}$ be a unital subalgebra of $B(X)$ such that $\mathcal{A}$ contains $\{x\otimes f_{0}:x\in X\}$,
where $0\neq f_{0}\in X^{*}$, and $m,n$ be two positive integers with $m\neq n$.
If $\delta$ is an $(m,n)$-Jordan derivable mapping at zero from $\mathcal{A}$
into $B(X)$ such that $\delta(I)=0$, then $\delta$ is identically equal to zero.
\end{corollary}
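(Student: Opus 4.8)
The plan is to exhibit $\mathcal{J}:=\{x\otimes f_{0}:x\in X\}$ as a left separating set of the bimodule $B(X)$ that is contained in $\mathfrak{J}(\mathcal{A})$, and then to quote Theorem 3.4 directly; this is the exact dual of the argument behind Corollary 3.10, where the set $\{x_{0}\otimes f:f\in X^{*}\}$ served as a \emph{right} separating set. First I would record the structural facts about $\mathcal{J}$ that Theorem 3.4 requires. Since for every $T$ in $\mathcal{A}$ one has $T(x\otimes f_{0})=(Tx)\otimes f_{0}$, the set $\mathcal{J}$ is a left ideal of $\mathcal{A}$ (the left-sided counterpart of the right ideal $\{x_{0}\otimes f\}$ of Corollary 3.10, and it is this one-sided property that feeds the left separating case of Theorem 3.4); moreover $\mathcal{M}=B(X)$ is a unital $\mathcal{A}$-bimodule because $\mathcal{A}$ is a unital subalgebra of $B(X)$.

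Next I would verify that $\mathcal{J}\subseteq\mathfrak{J}(\mathcal{A})$. Fix $e$ in $X$ with $f_{0}(e)=1$, which exists because $f_{0}\neq0$; then $P:=e\otimes f_{0}$ is an idempotent lying in $\mathcal{J}\subseteq\mathcal{A}$. For an arbitrary $x$ in $X$, if $f_{0}(x)\neq0$ then $x\otimes f_{0}=f_{0}(x)\cdot\big(\tfrac{1}{f_{0}(x)}x\otimes f_{0}\big)$ is a scalar multiple of the idempotent $\tfrac{1}{f_{0}(x)}x\otimes f_{0}$, while if $f_{0}(x)=0$ then $(e+x)\otimes f_{0}$ is again an idempotent and $x\otimes f_{0}=(e+x)\otimes f_{0}-e\otimes f_{0}$ is a difference of two idempotents of $\mathcal{A}$. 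In either case $x\otimes f_{0}$ lies in the subalgebra generated by the idempotents of $\mathcal{A}$, so $\mathcal{J}\subseteq\mathfrak{J}(\mathcal{A})$.

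To see that $\mathcal{J}$ is a left separating set of $B(X)$, suppose $N$ in $B(X)$ satisfies $N\mathcal{J}=\{0\}$. For every $x$ one has $N(x\otimes f_{0})=(Nx)\otimes f_{0}=0$, and since $f_{0}\neq0$ this forces $Nx=0$ for all $x$, i.e.\ $N=0$. With these facts in hand, $\mathcal{A}$ is a unital algebra, $\mathcal{M}=B(X)$ is a unital $\mathcal{A}$-bimodule with the left separating set $\mathcal{J}\subseteq\mathfrak{J}(\mathcal{A})$, and $\delta(I)=0$; hence Theorem 3.4 applies and yields $\delta\equiv0$.

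I expect the only real content to be the containment $\mathcal{J}\subseteq\mathfrak{J}(\mathcal{A})$: one must express each rank-one operator $x\otimes f_{0}$ through idempotents of $\mathcal{A}$, and the mild obstacle is the degenerate case $f_{0}(x)=0$, where $x\otimes f_{0}$ is nilpotent and so cannot itself be a scalar multiple of an idempotent, forcing the realization as a difference of two idempotents as above. Everything else — the left-ideal property, the separating property, and the reduction — is a direct transcription of the proof of Corollary 3.10 with the roles of vectors and functionals interchanged.
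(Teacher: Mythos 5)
Your proposal is correct and follows exactly the route the paper intends: the paper proves this corollary by simply invoking Theorem 3.4 (``similar to Corollary 3.10''), with $\{x\otimes f_{0}:x\in X\}$ serving as a left ideal, contained in $\mathfrak{J}(\mathcal{A})$, that is a left separating set of $B(X)$. Your verification of the idempotent decomposition (including the nilpotent case $f_{0}(x)=0$), the left-ideal property, and the separating property supplies precisely the details the paper leaves implicit.
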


\section{$(m,n)$-Jordan derivable mappings at zero on generalized matrix algebras}

In this section, we give a characterization of $(m,n)$-Jordan derivable mappings at zero on generalized matrix algebras.

\begin{theorem}
Suppose that $\mathcal{A},\mathcal{B}$ are two unital algebras,
$m,n$ be two positive integers with $m\neq n$, and
$\mathcal{U}={\left[\begin{array}{cc}\mathcal{A} &\mathcal{M} \\\mathcal{N} & \mathcal{B} \\\end{array}\right]}$
is a generalized matrix ring. If one of the following four statements holds:\\
$(1)$~$\mathcal{M}$ is a faithful unital $(\mathcal{A},\mathcal{B})$-bimodule;\\
$(2)$~$\mathcal{N}$ is a faithful unital $(\mathcal{B},\mathcal{A})$-bimodule;\\
$(3)$~$\mathcal{M}$ is a faithful unital left $\mathcal{A}$-module, $\mathcal{N}$ is a faithful unital left $\mathcal{B}$-module;\\
$(4)$~$\mathcal{N}$ is a faithful unital right $\mathcal{A}$-module, $\mathcal{M}$ is a faithful unital right $\mathcal{B}$-module,\\
then every $(m,n)$-Jordan derivable mapping from generalized matrix algebra $\mathcal{U}$ into itself
satisfies $\delta({\left[\begin{array}{cc}I_\mathcal{A} &0 \\0 & I_\mathcal{B} \\\end{array}\right]})={\left[\begin{array}{cc}0 &0 \\0 & 0 \\\end{array}\right]}$ is identically equal to zero.
\end{theorem}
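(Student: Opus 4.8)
The plan is to exploit the standard idempotents of the generalized matrix algebra and then reduce everything to the machinery already developed in Section 3. Write $P={\left[\begin{array}{cc}I_\mathcal{A} &0 \\0 & 0 \\\end{array}\right]}$ and $Q={\left[\begin{array}{cc}0 &0 \\0 & I_\mathcal{B} \\\end{array}\right]}$, and for $M\in\mathcal{M}$, $N\in\mathcal{N}$ set $\widetilde{M}={\left[\begin{array}{cc}0 & M \\0 & 0 \\\end{array}\right]}$ and $\widetilde{N}={\left[\begin{array}{cc}0 & 0 \\N & 0 \\\end{array}\right]}$. Here $P,Q$ are orthogonal idempotents with $P+Q=I$, and a direct multiplication check shows that ${\left[\begin{array}{cc}I_\mathcal{A} & M \\0 & 0 \\\end{array}\right]}$ and ${\left[\begin{array}{cc}0 & 0 \\N & I_\mathcal{B} \\\end{array}\right]}$ are idempotents in $\mathcal{U}$. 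Subtracting $P$ (respectively $Q$) from these idempotents exhibits $\widetilde{M}$ and $\widetilde{N}$ as differences of idempotents, so $P,Q,\widetilde{M},\widetilde{N}\in\mathfrak{J}(\mathcal{U})$ for every $M$ and $N$.

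Since $\mathcal{U}$ is unital and $\delta(I)=0$, Corollary 3.3 applies with $\mathcal{A}:=\mathcal{U}$ and the bimodule taken to be $\mathcal{U}$ itself: for every $S\in\mathfrak{J}(\mathcal{U})$ and every $T\in\mathcal{U}$ we have $\delta(ST)=\delta(TS)=\delta(T)S=S\delta(T)$. Putting $T=I$ gives $\delta(S)=S\delta(I)=0$, so $\delta$ vanishes identically on $\mathfrak{J}(\mathcal{U})$; in particular $\delta(\widetilde{M})=\delta(\widetilde{N})=0$. Hence, by additivity, for $T={\left[\begin{array}{cc}A & M \\N & B \\\end{array}\right]}$ one has $\delta(T)=\delta(\widehat{A})+\delta(\widehat{B})$, where $\widehat{A}={\left[\begin{array}{cc}A & 0 \\0 & 0 \\\end{array}\right]}$ and $\widehat{B}={\left[\begin{array}{cc}0 & 0 \\0 & B \\\end{array}\right]}$, and it remains only to prove $\delta(\widehat{A})=\delta(\widehat{B})=0$. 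Taking $S=P$ shows that each $\delta(T)$ commutes with $P$ and is therefore block diagonal, say $\delta(T)={\left[\begin{array}{cc}\alpha(T) & 0 \\0 & \beta(T) \\\end{array}\right]}$; taking $S=\widetilde{M}$ and $S=\widetilde{N}$ in $S\delta(T)=\delta(T)S$ and comparing the off-diagonal entries yields the intertwining identities $\alpha(T)M=M\beta(T)$ and $\beta(T)N=N\alpha(T)$ for all $M\in\mathcal{M}$, $N\in\mathcal{N}$ and $T\in\mathcal{U}$.

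Now I would pin down the two diagonal components. Because $P\widehat{A}=\widehat{A}$, Corollary 3.3 gives $\delta(\widehat{A})=P\delta(\widehat{A})$, so $\beta(\widehat{A})=0$; writing $f(A)=\alpha(\widehat{A})$, the intertwining identities evaluated at $T=\widehat{A}$ give $f(A)M=0$ and $Nf(A)=0$, that is $f(A)\mathcal{M}=\{0\}$ and $\mathcal{N}f(A)=\{0\}$. Symmetrically $\delta(\widehat{B})={\left[\begin{array}{cc}0 & 0 \\0 & g(B) \\\end{array}\right]}$ with $\mathcal{M}g(B)=\{0\}$ and $g(B)\mathcal{N}=\{0\}$. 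At this point each of the four hypotheses supplies exactly the faithfulness needed to kill both $f$ and $g$: under $(1)$ the left faithfulness of $\mathcal{M}$ over $\mathcal{A}$ forces $f(A)=0$ while the right faithfulness of $\mathcal{M}$ over $\mathcal{B}$ forces $g(B)=0$; under $(2)$ the right faithfulness of $\mathcal{N}$ over $\mathcal{A}$ kills $f$ via $\mathcal{N}f(A)=\{0\}$ and the left faithfulness over $\mathcal{B}$ kills $g$ via $g(B)\mathcal{N}=\{0\}$; and $(3)$, $(4)$ are handled by the same two relations with the roles of $\mathcal{M}$ and $\mathcal{N}$ interchanged. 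In every case $f\equiv g\equiv 0$, whence $\delta(\widehat{A})=\delta(\widehat{B})=0$ and therefore $\delta$ is identically zero.

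The conceptual key steps are the observation that the off-diagonal elements $\widetilde{M},\widetilde{N}$ already lie in $\mathfrak{J}(\mathcal{U})$ — this is what lets Corollary 3.3 both annihilate the off-diagonal contribution to $\delta(T)$ and manufacture the intertwining relations — and the bookkeeping that matches each of $(1)$–$(4)$ to the vanishing of $f$ and $g$. I expect the main obstacle to be establishing $\delta(I)=0$ in the first place if it is not taken as a hypothesis: one would have to run the $(m,n)$-Jordan derivable-at-zero relation on the orthogonal idempotent pairs $(P,Q)$, $(P+\widetilde{M},\,I-P-\widetilde{M})$ and $(Q+\widetilde{N},\,I-Q-\widetilde{N})$ and compress by $P$ and $Q$, solving the resulting linear system (using $m\neq n$ together with the faithfulness of the modules) to force $\delta(P)=\delta(Q)=0$; everything after that is the clean reduction described above.
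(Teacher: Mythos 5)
Your proof is correct. It shares the paper's overall strategy---reduce everything to the Section 3 machinery via the standard idempotents of $\mathcal{U}$---but the decisive step is genuinely different. The paper decomposes $\delta$ into sixteen component maps $a_{ij},b_{ij},c_{ij},d_{ij}$, uses Proposition 3.2 with $P$ to annihilate all of them except $a_{11}$ and $d_{22}$, and then kills those two by returning to the defining derivable-at-zero identity on hand-crafted zero-product pairs such as $T=\left[\begin{smallmatrix}A&-AM\\0&0\end{smallmatrix}\right]$, $S=\left[\begin{smallmatrix}0&M\\0&I_{\mathcal{B}}\end{smallmatrix}\right]$, which yields $m\,a_{11}(A)M=0$; it needs a separate such construction for the different faithfulness hypotheses. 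You, by contrast, never touch the defining identity again after invoking Corollary 3.3: your key observation, which the paper does not use, is that $\widetilde{M}$ and $\widetilde{N}$ are differences of idempotents and hence lie in $\mathfrak{J}(\mathcal{U})$. That gives you at once $\delta(\widetilde{M})=\delta(\widetilde{N})=0$ (the paper derives this instead from $\delta(TS)=\delta(ST)$) and, more importantly, that every $\delta(T)$ commutes with every $\widetilde{M}$ and $\widetilde{N}$, producing the intertwining relations $\alpha(T)M=M\beta(T)$ and $\beta(T)N=N\alpha(T)$; combined with $\beta(\widehat{A})=0$ and $\alpha(\widehat{B})=0$ these give $f(A)\mathcal{M}=\mathcal{N}f(A)=\{0\}$ and $\mathcal{M}g(B)=g(B)\mathcal{N}=\{0\}$, so all four hypotheses $(1)$--$(4)$ are dispatched uniformly by the same pair of relations. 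What your route buys is exactly this uniformity and the elimination of coefficient bookkeeping (it also sidesteps the small computational slips in the paper's case $(3)$ display); what the paper's route buys is that it needs only Proposition 3.2 for single idempotents rather than the full closure statement of Corollary 3.3. One remark: the concern in your final paragraph is moot, since $\delta(I)=0$ is a hypothesis of the theorem (the statement's garbled phrasing obscures this), so your proof as written is complete.
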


\begin{proof}
Since $\delta$ is a linear mapping, for each $A\in\mathcal A$, $B\in\mathcal B$, $M\in\mathcal M$ and $N\in\mathcal N$,
we have that
\begin{align*}
&\delta\left({\left[\begin{array}{cc}A & M \\N & B \\\end{array}\right]}\right)\\
=&\left[\begin{array}{cc}a_{11}(A)+b_{11}(M)+c_{11}(N)+d_{11}(B) &a_{12}(A)+b_{12}(M)+c_{12}(N)+d_{12}(B) \\a_{21}(A)+b_{21}(M)+c_{21}(N)+d_{21}(B) & a_{22}(A)+b_{22}(M)+c_{22}(N)+d_{22}(B) \\\end{array}\right],
\end{align*}
where $a_{ij},b_{ij},c_{ij}~\mathrm{and}~d_{ij}$ are linear mappings, $i,j\in\{1,2\}$.

Let $I_{\mathcal{A}}$ be a unit element in $\mathcal{A}$ and $I_{\mathcal{B}}$ be a unit element in $\mathcal{B}$.
For every $M$ in $\mathcal{M}$, suppose that
$T=\left[\begin{array}{cc}0 & M \\0 & 0 \\\end{array}\right]$ and $S=\left[\begin{array}{cc}I_{\mathcal{A}} & 0 \\0 & 0 \\\end{array}\right]$.
By Proposition 3.2, we have that $\delta(TS)=\delta(ST)$, that is
\begin{align*}
\delta\left(\left[\begin{array}{cc}0 & M \\0 & 0 \\\end{array}\right]\left[\begin{array}{cc}I_{\mathcal{A}} & 0 \\0 & 0 \\\end{array}\right]\right)
=\delta\left(\left[\begin{array}{cc}I_{\mathcal{A}} & 0 \\0 & 0 \\\end{array}\right]\left[\begin{array}{cc}0 & M \\0 & 0 \\\end{array}\right]\right),
\end{align*}
it follows that
\begin{align*}
0=\delta\left(\left[\begin{array}{cc}0 & M \\0 & 0 \\\end{array}\right]\right)
=\left[\begin{array}{cc}b_{11}(M)&b_{12}(M)\\b_{21}(M)&b_{22}(M)\\\end{array}\right].
\end{align*}
Thus, for every $M$ in $\mathcal{M}$, we can obtain that
\begin{align}
b_{11}(M)=b_{12}(M)=b_{21}(M)=b_{22}(M)=0.                                   \label{334}
\end{align}
Similarly, for every $N$ in $\mathcal{N}$, we can show that
\begin{align}
c_{11}(N)=c_{12}(N)=c_{21}(N)=c_{22}(N)=0.                                  \label{335}
\end{align}

For every $A$ in $\mathcal{A}$,
suppose that $T=\left[\begin{array}{cc}A & 0 \\0 & 0 \\\end{array}\right]$ and $S=\left[\begin{array}{cc}I_{\mathcal{A}} & 0\\0 & 0 \\\end{array}\right]$.
By Proposition 3.2, we have that $\delta(TS)=S\delta(T)=\delta(T)S$, that is
\begin{align*}
\delta\left(\left[\begin{array}{cc}A & 0 \\0 & 0 \\\end{array}\right]\left[\begin{array}{cc}I_{\mathcal{A}} & 0 \\0 & 0 \\\end{array}\right]\right)
=\left[\begin{array}{cc}I_{\mathcal{A}} & 0 \\0 & 0 \\\end{array}\right]\delta\left(\left[\begin{array}{cc}A & 0 \\0 & 0 \\\end{array}\right]\right)
=\delta\left(\left[\begin{array}{cc}A & 0 \\0 & 0 \\\end{array}\right]\right)\left[\begin{array}{cc}I_{\mathcal{A}} & 0 \\0 & 0 \\\end{array}\right],
\end{align*}
it follows that
\begin{align*}
\left[\begin{array}{cc}a_{11}(A) & a_{12}(A) \\a_{21}(A) & a_{22}(A)\\\end{array}\right]
=\left[\begin{array}{cc}I_{\mathcal{A}} & 0 \\0 & 0 \\\end{array}\right]\left[\begin{array}{cc}a_{11}(A) & a_{12}(A) \\a_{21}(A) & a_{22}(A)\\\end{array}\right]
=\left[\begin{array}{cc}a_{11}(A) & a_{12}(A) \\a_{21}(A) & a_{22}(A)\\\end{array}\right]\left[\begin{array}{cc}I_{\mathcal{A}} & 0 \\0 & 0 \\\end{array}\right],
\end{align*}
it implies that
\begin{align*}
\left[\begin{array}{cc}a_{11}(A) & a_{12}(A) \\a_{21}(A) & a_{22}(A)\\\end{array}\right]
=\left[\begin{array}{cc}a_{11}(A) & a_{12}(A) \\0 & 0\\\end{array}\right]
=\left[\begin{array}{cc}a_{11}(A) &0 \\a_{21}(A) & 0\\\end{array}\right].
\end{align*}
Thus, for every $A$ in $\mathcal{A}$, we can obtain that
\begin{align}
a_{12}(A)=a_{21}(A)=a_{22}(A)=0.                                  \label{336}
\end{align}
Similarly for every $B$ in $\mathcal{B}$, we can show that
\begin{align}
d_{12}(B)=d_{21}(B)=d_{11}(B)=0.                                  \label{337}
\end{align}

In the following, we prove that $a_{11}(A)=d_{22}(B)=0$.

Suppose that $\mathcal{M}$ is a faithful unital $(\mathcal{A},\mathcal{B})$-bimodule.
For every $A$ in $\mathcal{A}$, suppose that
$T=\left[\begin{array}{cc}A & -AM \\0 & 0 \\\end{array}\right]$ and $S=\left[\begin{array}{cc}0 & M\\0 & I_{\mathcal{B}} \\\end{array}\right]$,
it is clear that $TS=ST=0$. By the definition of $\delta$, we have that
$$m\delta(T)S+m\delta(S)T+nT\delta(S)+nS\delta(T)=0.$$
By \eqref{334} and \eqref{336}, we can obtain that
\begin{align*}
m\left[\begin{array}{cc}a_{11}(A) & 0 \\0 & 0 \\\end{array}\right]\left[\begin{array}{cc}0 & M\\0 & I_{\mathcal{B}} \\\end{array}\right]
+n\left[\begin{array}{cc}0 & M\\0 & I_{\mathcal{B}} \\\end{array}\right]\left[\begin{array}{cc}a_{11}(A) & 0 \\0 & 0 \\\end{array}\right]
=0.
\end{align*}
It follows that
$m\left[\begin{array}{cc}0 & a_{11}(A)M \\0 & 0\\\end{array}\right]=0$,
it means that $ma_{11}(A)M$ for every $M$ in $\mathcal M$,
since $m>0$ and $\mathcal M$ is a left faithful unital $\mathcal A$-module,
we can obtain that $a_{11}(A)=0$.
Similarly, since $n>0$ and $\mathcal M$ is a right faithful unital $\mathcal B$-module,
we can obtain that $d_{22}(B)=0$.

Suppose that $\mathcal{N}$ is a faithful unital $(\mathcal{B},\mathcal{A})$-bimodule.
Similar to the above method, we also can prove that $a_{11}(A)=d_{22}(B)=0$.

Suppose that $\mathcal{M}$ is a faithful unital left $\mathcal{A}$-module and
$\mathcal{N}$ is a faithful unital left $\mathcal{B}$-module.
For every $A$ in $\mathcal A$, we have prove that $a_{11}(A)=0$.
For every $B$ in $\mathcal{B}$, suppose that
$T=\left[\begin{array}{cc}0 & 0 \\-NB & B \\\end{array}\right]$ and $S=\left[\begin{array}{cc}I_{\mathcal{A}} & 0\\N & 0 \\\end{array}\right]$,
it is clear that $TS=ST=0$. By the definition of $\delta$, we have that
$$m\delta(T)S+m\delta(S)T+nT\delta(S)+nS\delta(T)=0.$$
By \eqref{334} and \eqref{336}, we can obtain that
\begin{align*}
m\left[\begin{array}{cc}0 & 0 \\0 & d_{22}(B) \\\end{array}\right]\left[\begin{array}{cc}I_{\mathcal{A}} & 0\\N & 0 \\\end{array}\right]
+n\left[\begin{array}{cc}I_{\mathcal{A}} & 0\\N & 0 \\\end{array}\right]\left[\begin{array}{cc}0 & 0 \\0 & d_{22}(B) \\\end{array}\right]
=0.
\end{align*}
It follows that
$m\left[\begin{array}{cc}0 & 0 \\0 & d_{22}(B)N\\\end{array}\right]=0$,
it means that $md_{22}(B)N$ for every $N$ in $\mathcal N$,
since $m>0$ and $\mathcal N$ is a left faithful unital $\mathcal N$-module,
we can obtain that $d_{22}(B)=0$.
\end{proof}

\bibliographystyle{amsplain}

\end{document}